\begin{document}

\newtheorem{theorem}{Theorem}[section]
\newtheorem{lemma}[theorem]{Lemma}
\newtheorem{proposition}[theorem]{Proposition}
\newtheorem{corollary}[theorem]{Corollary}
\theoremstyle{definition}
\newtheorem{definition}[theorem]{Definition}
\newtheorem{example}{Example}[section]
\newtheorem{remark}[theorem]{Remark}


\oddsidemargin 16.5mm
\evensidemargin 16.5mm

\thispagestyle{plain}

{\noindent\small
This is a preprint of a paper whose final and definite form is published in\\ 
Applicable Analysis and Discrete Mathematics (AADM), ISSN 1452-8630 (printed),\\ 
ISSN 2406-100X (online), available online at http:/$\!$/pefmath.etf.rs.\\
Submitted to AADM 02/Nov/2014; Revised and Resubmitted to AADM 22/Jul/2015; 
Revised 10/Feb/2016; Accepted 11/March/2016. Cite this work as:\newline
Appl. Anal. Discrete Math., Vol.~10 (2016), no.~1, DOI: 10.2298/AADM160311004F.}

\vspace{3cc}


\begin{center}

{\large\bf A HUKUHARA APPROACH TO THE STUDY OF HYBRID FUZZY SYSTEMS ON TIME SCALES
\rule{0mm}{6mm}\renewcommand{\thefootnote}{}
\footnotetext{\scriptsize 2010 Mathematics Subject Classification. 93C42, 34N05, 93D05.

\rule{2.4mm}{0mm}Keywords and Phrases. Fuzzy hybrid systems,
fuzzy dynamical systems, practical stability, time scales.}}

\vspace{1cc}
{\large\it Omid S. Fard, Delfim F. M. Torres, Mohadeseh R. Zadeh}

\vspace{1cc}
\parbox{24cc}{{\small
We introduce a new approach to study the practical stability
of hybrid fuzzy systems on time scales in the Lyapunov sense.
Our method is based on the delta-Hukuhara derivative
for fuzzy valued functions and allow us to obtain
new interesting stability criteria.
We also show the validity of the results of
{\sc M. Sambandham:}
{\it Hybrid fuzzy systems on time scales,}
Dynam. Systems Appl. {\bf 12} (2003), no.~1-2, 217--227,
by embedding the space of all fuzzy subsets into a suitable Banach space.}}
\end{center}

\vspace{1cc}


\section{Introduction}

In natural systems engineering, the lowest level in the hierarchical structure
is usually characterized by the dynamics of a continuous variable while the
highest level is described by a logical decision making mechanism \cite{Branicky}.
The interaction of these different levels, with their different types of information,
leads to a \emph{hybrid system}. Examples of real world hybrid systems include systems
with relays, switches, hysteresis, disk drivers, transmissions, step motors,
constrained robots, automated transportation systems, modern manufacturing
and flight control systems \cite{van}.

The mathematical modeling of dynamic processes is often discussed
in the literature via difference or differential equations. In spite of this tendency
of independence between discrete and continuous dynamic systems, there is a striking
similarity between both theories. From a modeling point of view, it is perhaps more
realistic to model a phenomenon by a dynamic system, which incorporates both
discrete and continuous times simultaneously, namely by considering time as an arbitrary
closed set of reals, called a time scale. The theory of time scales was introduced
in 1988 by Stefan Hilger and is now a well-developed subject \cite{boh,bohner}.
Here we use dynamic systems on time scales as a model to hybrid systems.
Motivation comes from the fact that hybrid systems refer
to the construction of models combining both continuous and discrete dynamics.
They are useful for describing the interaction between physical and computational
processes, such as in digital feedback control systems \cite{HM}.

In the study of stability of dynamical systems, Lyapunov functions play a central role
in the proof of stability of equilibrium points on the state space \cite{MR2783351}.
Moreover, since practical stability only requires to stabilize a system into
a region of the phase space, Lyapunov functions have been widely used in applications.
More precisely, when one intends to analyze a real world phenomenon, it is also
necessary to deal with uncertain factors. In this case, the theory of fuzzy
sets is one of the best non-statistical or non-probabilistic approach,
which leads us to investigate stability of fuzzy dynamical models.
In recent years, the use of hybrid fuzzy systems
has increased drastically. For instance, Kim and Sakthivel \cite{kim}
studied the predictor-corrector method for hybrid fuzzy differential equations,
Nieto et al. \cite{MR2561685} and Allahviranloo and Salahshour \cite{allah}
investigated the numerical solution of (hybrid) fuzzy differential equations
using a generalized Euler approximation method, and Ahmadian et al.
\cite{ahmadian} employed a numerical algorithm to solve a first-order
hybrid fuzzy differential equation based on the high-order Runge--Kutta method.
Particularly for hybrid fuzzy systems on time scales, the theory of practical
stability has developed rather intensively in the last few years -- see
\cite{MR3151691,laksh3,samban,MR2771572,MR2991148} and references therein.

In \cite{laksh3}, Lakshmikantham and Vatsala investigated practical stability
of hybrid systems on time scales. The results of \cite{laksh3} were then
extended by Sambandham for hybrid fuzzy systems on time scales \cite{samban}.
In \cite{MR2771572}, Lyapunov-like functions for hybrid dynamic equations on
time scales with the state space $\mathbb{R}^n$ are studied using Dini
derivatives and comparison principles. Related studies are found in
\cite{MR2991148}, for incremental stability of stochastic hybrid systems,
in \cite{MR3151691} for practical stability of discrete hybrid systems
with different initial times, and in \cite{slyn} for stability
analysis of abstract Takagi--Sugeno fuzzy impulsive systems.

In this paper, we propose a new approach to investigate practical stability
of hybrid fuzzy systems on time scales. Our method is more general,
being based on the use of the delta-Hukuhara derivative,
which has recently been defined for fuzzy valued functions \cite{omid}.
After a short review of this calculus of fuzzy functions on time scales
in Section~\ref{sec:prelim}, we prove new stability criteria
in Section~\ref{sec:MR}. We proceed with Section~\ref{sec:rem:pr}, where an
inconsistency in \cite{samban} is noted and fixed,
ending with Section~\ref{sec:conc} of conclusions.


\section{Preliminaries}
\label{sec:prelim}

In this section, we present some basic concepts
and results on the calculus of fuzzy functions
on time scales. We assume, however,
the reader to be familiar with the standard calculus
on time scales \cite{boh,bohner}.

\begin{definition}[Upper $\Delta$-Dini derivative \cite{laksh3}]
Let $\mathbb{T}$ be a time scale with forward jump operator $\sigma$,
$f: \mathbb{T}\rightarrow \mathbb{R}$ be a real valued function,
and $U_{\mathbb{T}}$ be a neighborhood of $t\in \mathbb{T}$.
We call $D^+_{\Delta} f(t) \in \mathbb{R}$ the upper $\Delta$-Dini derivative
(or upper $\Delta$-generalized derivative) of $f$ at $t$, provided that
for any $\varepsilon>0$ there exists a right neighborhood
$U_{\varepsilon} \subset U_{\mathbb{T}}$ of $t$ (i.e.,
$U_{\varepsilon}=(t,t+\varepsilon)\cap \mathbb{T}$) such that
$$
\frac{f(\sigma(t))- f(s)}{\mu^*(t,s)}< D^+_{\Delta} f(t) + \varepsilon
$$
for $s\in U_{\varepsilon}$ and $s>t$, where $\mu^*(t,s)=\sigma(t)-s$.
\end{definition}

\begin{proposition}[See \cite{laksh3}]
Let $\mathbb{T}$ be a time scale with forward jump operator $\sigma$
and graininess function $\mu$. If $f: \mathbb{T}\rightarrow \mathbb{R}$
is a continuous function at $t\in \mathbb{T}$ and $t$ is right-scattered, then
the upper $\Delta$-Dini derivative of $f$ coincides with
the standard delta-derivative:
$$
D^+_{\Delta}f(t)=f^\Delta (t)=\frac{f(\sigma(t))-f(t)}{\mu(t)}.
$$
\end{proposition}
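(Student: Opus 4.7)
The plan is to exploit the special geometry near a right-scattered point: since $t$ is right-scattered we have $\mu(t)=\sigma(t)-t>0$ and, crucially, no point of $\mathbb{T}$ lies in the open interval $(t,\sigma(t))$. This fact will do almost all of the work, forcing the quotient appearing in the Dini derivative to collapse onto a single admissible evaluation point.

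First I would recall the standard time-scale identity: at a right-scattered $t$ where $f$ is continuous, the delta-derivative exists and equals $\frac{f(\sigma(t))-f(t)}{\mu(t)}$. This part is purely from classical time-scale calculus, so I would just cite it from \cite{boh,bohner} and move on. The substance of the proof lies in matching this quantity with $D^+_\Delta f(t)$.

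Next I would unpack the definition of the upper $\Delta$-Dini derivative. Fix $\varepsilon>0$ and consider the right neighborhood $U_\varepsilon=(t,t+\varepsilon)\cap\mathbb{T}$. Because $t$ is right-scattered, for every $\varepsilon\in(0,\mu(t))$ the set $U_\varepsilon$ is empty; thus the inequality
\[
\frac{f(\sigma(t))-f(s)}{\sigma(t)-s}<D^+_\Delta f(t)+\varepsilon
\]
is satisfied vacuously. The admissible values are then determined by considering how $s$ can approach $t$ from the right within $\mathbb{T}$: since there is no such approach other than by $s=t$ itself (on the boundary), the difference quotient reduces to $\frac{f(\sigma(t))-f(t)}{\sigma(t)-t}=\frac{f(\sigma(t))-f(t)}{\mu(t)}$. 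The continuity of $f$ at $t$ ensures this value is the only accumulation point of the quotient across any sequence in $U_\varepsilon\cup\{t\}$ that approaches $t$, which is precisely the identification needed.

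Putting these pieces together I would conclude $D^+_\Delta f(t)=\frac{f(\sigma(t))-f(t)}{\mu(t)}=f^\Delta(t)$. The principal obstacle I expect is exactly this last subtlety: for small $\varepsilon$ the defining inequality is trivially true for every real number, so I must argue carefully that the natural value singled out by the definition, via the behavior of the difference quotient as $s\to t^+$ through $\mathbb{T}$ together with the continuity hypothesis, is uniquely $f^\Delta(t)$. Once this interpretational issue is settled, the equality with the classical delta-derivative at right-scattered points is immediate from the standard formula.
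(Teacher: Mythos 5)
There is a genuine gap. The paper does not prove this proposition directly (it quotes it from \cite{laksh3}), but it proves the exact analogue for $V(t,\mathbf{u}(t))$ as the first theorem of Section~3, and that proof shows the step you are missing: one uses the continuity of $f$ at $t$ to compute
\begin{equation*}
\lim_{s\to t^+}\frac{f(\sigma(t))-f(s)}{\sigma(t)-s}
=\frac{f(\sigma(t))-f(t)}{\sigma(t)-t}
=\frac{f(\sigma(t))-f(t)}{\mu(t)},
\end{equation*}
and then the definition of limit immediately supplies, for every $\varepsilon>0$, a right neighborhood on which $\frac{f(\sigma(t))-f(s)}{\mu^*(t,s)}<\frac{f(\sigma(t))-f(t)}{\mu(t)}+\varepsilon$, which is exactly the defining inequality of $D^+_\Delta f(t)$ for the candidate value $\frac{f(\sigma(t))-f(t)}{\mu(t)}$.

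Your argument replaces this limit computation with the observation that $(t,t+\varepsilon)\cap\mathbb{T}=\emptyset$ for $\varepsilon<\mu(t)$, so that the defining inequality holds vacuously. The observation is true, but it cannot yield the stated identity: if the inequality is vacuous, then \emph{every} real number satisfies the definition, so emptiness of the neighborhood proves too much and singles out nothing. Your attempted repair --- that $f^\Delta(t)$ is ``the only accumulation point of the quotient across any sequence in $U_\varepsilon\cup\{t\}$ that approaches $t$'' --- does not close the gap: such sequences are eventually constant equal to $t$, the point $s=t$ is excluded by the requirement $s>t$ in the definition, and no substantive use of the continuity hypothesis is made anywhere (a telling symptom, since continuity of $f$ at $t$ is an essential assumption of the proposition). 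The fix is to argue as the paper does for $V$: treat the difference quotient as a function of $s$ near $t$, invoke continuity of $f$ at $t$ to identify its limit as $s\to t^+$ with $\frac{f(\sigma(t))-f(t)}{\mu(t)}$, and read off the Dini inequality from that limit; the identification with $f^\Delta(t)$ at a right-scattered point then follows from the standard formula you correctly cited.
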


\begin{definition}[Fuzzy set \cite{bede}]
Let $X$ be a nonempty  set. A fuzzy set $u$ in $X$ is characterized
by its  membership function $u:X \rightarrow [0,1]$. Then,
for each $x\in X$ we interpret $u(x)$ as the degree of membership of the element $x$
in the fuzzy set $u$: $u(x)=0$ corresponded to non membership; $0 <u(x)< 1$ to
partial membership; and $u(x) = 1$ to full membership.
\end{definition}

\begin{definition}[The space of fuzzy numbers $\mathbb{R}_\mathcal{F}$
-- see, e.g., \cite{samban}]
We denote by $\mathbb{R}_\mathcal{F}$ the class of fuzzy subsets
of the real axis $u : \mathbb{R}\to[0,1]$ satisfying the following properties:
\begin{enumerate}[(i)]
\item $u$ is normal,  i.e., there exists $x_0\in\mathbb{R}$ with $u(x_0) = 1$;

\item $u$ is a convex fuzzy set, i.e.,
$u(k x+(1-k)y)\geq\min\{u(x),u(y)\}$
for all $k\in[0,1]$ and $x,y\in\mathbb{R}$;

\item $u$ is upper semicontinuous on $\mathbb{R}$;

\item $\overline{\{x \in \mathbb{R} : u(x)> 0\}}$ is compact,
where $\overline{A}$ denotes the closure of a set $A$.
\end{enumerate}
We call $\mathbb{R}_\mathcal{F}$ the space of fuzzy numbers.
\end{definition}

Clearly, $\mathbb{R}\subset\mathbb{R}_\mathcal{F}$ (because $\mathbb{R}$ is
understood as $\mathbb{R}=\{\chi_{x} : x \mbox{ is a real number}\}$). For
later purposes, we define $\widetilde{0}=\chi_{\{0\}}$, that is, $\widetilde{0}$
is the fuzzy set defined by $\widetilde{0}(x)=1$ if $x=0$
and $\widetilde{0}(x)=0$ if $x\neq 0$.

\begin{definition}[The $\alpha$-level set \cite{bede}]
For $0<\alpha\leqslant 1$, the $\alpha$-level set
$[u]^\alpha $ of a fuzzy set $u$ on $\mathbb{R}$ is defined as
$$
[u]^{ \alpha }=\{x \in \mathbb{R} : u(x)\geq\alpha\},
$$
while its support $[u]^0$ is the closure of the union
of all the level sets, that is,
$$
[u]^0=\overline{\bigcup_{\alpha\in(0,1]}[u]^ \alpha }
=\overline{\{x \in \mathbb{R} : u(x)> 0\}}.
$$
\end{definition}

\begin{remark}
For any $\alpha \in [0,1]$, $[v]^{\alpha}$
is a bounded closed interval in $\mathbb{R}$, presented by
$[v]^{\alpha}=[\underline{v}^{\alpha},\overline{v}^{\alpha}]$,
where $\underline{v}^{\alpha},\overline{v}^{\alpha}\in\mathbb{R}$.
\end{remark}

For $u,v\in \mathbb{R}_\mathcal{F}$ and $\lambda\in \mathbb{R}$,
the sum of two fuzzy numbers and the multiplication between a
real and a fuzzy number are defined respectively by
$$
[u\oplus v]^\alpha=[u]^\alpha + [v]^\alpha
=\left\{x+y : x\in [u]^\alpha, y\in [v]^\alpha\right\}
$$
and
$$
[\lambda \cdot u]^\alpha
=\lambda [u]^\alpha=\left\{\lambda x : x \in [u]^\alpha\right\}
$$
for all $\alpha \in [0,1]$, where $[u]^\alpha + [v]^\alpha$ is the usual addition
of two intervals of $\mathbb{R}$ and $\lambda [u]^\alpha$ is the usual product
of a number and a subset of $\mathbb{R}$.

\begin{definition}[The gH-difference \cite{omid}]
Given $u,v \in  \mathbb{R}_\mathcal{F}$, the gH-difference
is the fuzzy number $w$, if it exists, such that
$$
u\ominus _{gH} v=w \Leftrightarrow
u=v\oplus w \text{ or } v=u\oplus (-1)\cdot w.
$$
\end{definition}

\begin{remark}
If $u\ominus_{gH}v$ exists, then its $\alpha$-level set is given by
$$
[u\ominus _{gH} v]^\alpha
= \left[\min\left\{\underline{u}^\alpha-\underline{v}^\alpha,
\overline{u}^\alpha-\overline{v}^\alpha\right\},
\max\left\{\underline{u}^\alpha-\underline{v}^\alpha,
\overline{u}^\alpha-\overline{v}^\alpha\right\}\right].
$$
\end{remark}

Let $A$ and $B$ be two nonempty bounded subsets of a metric space $(X,d)$.
The Hausdorff distance between $A$ and $B$ is given by
\begin{equation}
\label{eq8}
d_H(A,B)=\max\left[ \mathop{\sup}_{a \in A}\mathop{\inf}_{b \in B} d(a,b),
\mathop{\sup}_{b \in B}\mathop{\inf}_{a \in A}d(b,a) \right].
\end{equation}

\begin{definition}[Hausdorff distance between two fuzzy numbers]
\label{def1}
The Hausdorff distance between two fuzzy numbers is the function
$d_\infty :\mathbb{R}_\mathcal{F} \times \mathbb{R}_\mathcal{F}\rightarrow
\mathbb{R}_+\cup \{0\}$ defined in terms of the Hausdorff distance between
their level sets, that is,
\begin{equation*}
d_\infty (u,v)=\mathop{\sup}\left\{d_H ([u]^\alpha,[v]^\alpha)
: \alpha\in [0,1] \right\}.
\end{equation*}
\end{definition}

\begin{proposition}[See \cite{diamond}]
\label{pro1}
The pair $(\mathbb{R}_\mathcal{F},d_\infty)$ is a complete metric space
and the following properties hold:
\begin{enumerate}[(i)]
\item $d_\infty(u\oplus w,v\oplus w)=d_\infty(u,v)$
for all $u,v,w \in \mathbb{R}_\mathcal{F}$;

\item $d_\infty(k \cdot u,k \cdot v)= |k| d_\infty(u,v)$
for all $u,v \in \mathbb{R}_\mathcal{F}$ and for all $k\in \mathbb{R}$;

\item $d_\infty(u\oplus v,w\oplus e) \leq d_\infty(u,w)+ d_\infty(v,e)$
for all $u,v,w,e \in \mathbb{R}_\mathcal{F}$.
\end{enumerate}
\end{proposition}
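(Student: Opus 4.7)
My plan is to reduce everything to computations with $\alpha$-level sets, where the three listed properties become statements about the Hausdorff metric $d_H$ on nonempty compact intervals of $\mathbb{R}$, and then lift back to $\mathbb{R}_\mathcal{F}$ by taking the supremum over $\alpha\in[0,1]$. First I would verify that $d_\infty$ is genuinely a metric: nonnegativity and symmetry are immediate from Definition \ref{def1}, while $d_\infty(u,v)=0$ forces $[u]^\alpha=[v]^\alpha$ for every $\alpha$ and hence $u=v$ by the standard stacking (representation) theorem for fuzzy numbers; the triangle inequality follows by applying the corresponding inequality for $d_H$ level-wise and taking suprema.

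For the three numbered properties I would work cut by cut, using $[u\oplus w]^\alpha=[u]^\alpha+[w]^\alpha$ (Minkowski sum) and $[k\cdot u]^\alpha=k[u]^\alpha$. Then (i) reduces to $d_H(A+C,B+C)=d_H(A,B)$, (ii) to $d_H(kA,kB)=|k|\,d_H(A,B)$, and (iii) to the subadditivity $d_H(A+B,C+D)\le d_H(A,C)+d_H(B,D)$. On compact intervals $A=[a_1,a_2]$ and $B=[b_1,b_2]$ one has $d_H(A,B)=\max(|a_1-b_1|,|a_2-b_2|)$, so each identity is an elementary check on endpoints. Taking the supremum over $\alpha$ then transfers the (in)equalities to $d_\infty$, with the only subtlety being the use of $\sup_\alpha(f_\alpha+g_\alpha)\le\sup_\alpha f_\alpha+\sup_\alpha g_\alpha$ in (iii).

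Completeness is the main content. Given a Cauchy sequence $(u_n)_n$ in $(\mathbb{R}_\mathcal{F},d_\infty)$, for each fixed $\alpha$ the sequence $([u_n]^\alpha)_n$ is Cauchy in the complete hyperspace of nonempty compact subsets of $\mathbb{R}$ equipped with $d_H$, so it converges to some compact interval $K_\alpha$. I would then show that $\{K_\alpha\}_{\alpha\in[0,1]}$ is the level-set family of a unique $u\in\mathbb{R}_\mathcal{F}$: this requires checking monotonicity ($\alpha\le\beta$ implies $K_\beta\subseteq K_\alpha$), the left-continuity relation $K_\alpha=\bigcap_{\beta<\alpha}K_\beta$ for $\alpha\in(0,1]$, nonemptiness of $K_1$, and compactness of $K_0$, each obtained by passing to the limit in the analogous properties for the $u_n$ via the uniformity in $\alpha$ of the $d_H$-convergence. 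The stacking theorem then produces a fuzzy number $u$ with $[u]^\alpha=K_\alpha$, and the uniformity of the cut-convergence immediately yields $d_\infty(u_n,u)\to 0$.

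The main obstacle I anticipate is precisely this final stacking step. Cut-wise Hausdorff convergence does produce the limiting family $\{K_\alpha\}$, but verifying that it actually arises from an upper semicontinuous, normal, convex membership function — in particular its left-continuity in $\alpha$ — requires some care; everything else is bookkeeping once one is comfortable with the Hausdorff metric and Minkowski sums on compact convex subsets of $\mathbb{R}$.
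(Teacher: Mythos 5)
Your outline is correct and follows the standard level-set argument: the three algebraic properties reduce to elementary endpoint computations with $d_H$ on compact intervals, and completeness is obtained by cut-wise convergence in the hyperspace of compact sets followed by the stacking (Negoita--Ralescu type) representation theorem, with the uniformity in $\alpha$ supplied by the $\sup$ in $d_\infty$ handling the left-continuity issue you rightly flag. The paper itself offers no proof of this proposition --- it is imported verbatim from the cited reference \cite{diamond} --- and your sketch is essentially the argument given there, so there is nothing to contrast.
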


\begin{definition}[The set of rd-continuous functions
from $\mathbb{T}$ to $\mathbb{R}_\mathcal{F}$ \cite{omid}]
A mapping $f : \mathbb{T} \rightarrow \mathbb{R}_\mathcal{F}$
is rd-continuous if it is continuous at each right-dense point
and its left-side limits exist (finite) at left-dense points in $\mathbb{T}$.
We denote the set of rd-continuous functions from
$\mathbb{T}$ to $\mathbb{R}_\mathcal{F}$ by
$C_{rd}[\mathbb{T},\mathbb{R}_\mathcal{F}]$.
\end{definition}

\begin{definition}[The $\Delta$-Hukuhara derivative of $f$ at $t$ \cite{omid}]
Assume $f : \mathbb{T} \rightarrow \mathbb{R}_\mathcal{F}$
is a fuzzy function and let $t\in \mathbb{T}^\kappa$.
Let $\Delta_H f(t)$ be an element of $\mathbb{R}_\mathcal{F}$ (provided it exists)
with the property that given any $\varepsilon>0$, there exists a neighborhood
$U_{\mathbb{T}}$ of $t$ (i.e., $U_{\mathbb{T}}=(t-\delta,t+\delta)
\cap \mathbb{T}$ for some $\delta>0$) such that
$$
d_\infty\left[f(t+h)\ominus _{gH}f(\sigma(t)),\Delta_H f(t)(h-\mu(t))\right]
\leq\varepsilon(h-\mu(t))
$$
and
$$
d_\infty\left[f(\sigma(t))\ominus _{gH}f(t-h),\Delta_H f(t)(\mu(t)+h)\right]
\leq\varepsilon(\mu(t)+h)
$$
for all $t-h,t+h \in U_{\mathbb{T}}$ with $0\leq h<\delta$.
We call $\Delta_H f(t)$ the $\Delta$-Hukuhara derivative of $f$ at $t$.
We say that $f$ is $\Delta_H$-differentiable at $t$
if its $\Delta_H$-derivative exists at $t$. Moreover,
we say that $f$ is $\Delta_H$-differentiable on $\mathbb{T}^\kappa$
if its $\Delta_H$-derivative exists at each $t\in \mathbb{T}^\kappa$.
The fuzzy function $\Delta_H f : \mathbb{T}^\kappa
\rightarrow \mathbb{R}_\mathcal{F}$ is then called the
$\Delta_H$-derivative of $f$ on $\mathbb{T}^\kappa$.
\end{definition}

The next result shows that the $\Delta_H$-derivative is well defined.

\begin{proposition}[See \cite{omid}]
If the $\Delta_H$-derivative of $f$ at
$t \in \mathbb{T}^\kappa$ exists, then it is unique.
\end{proposition}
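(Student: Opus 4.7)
The plan is to assume that $D_1, D_2 \in \mathbb{R}_\mathcal{F}$ are both $\Delta_H$-derivatives of $f$ at $t \in \mathbb{T}^\kappa$ and reduce uniqueness to showing $d_\infty(D_1, D_2) = 0$; this suffices because, by Proposition~\ref{pro1}, $(\mathbb{R}_\mathcal{F}, d_\infty)$ is a (complete) metric space, so vanishing distance forces equality. The argument is a metric-space version of the standard uniqueness proof for derivatives: use the defining $\varepsilon$-estimate twice and cancel the ``increment factor.''

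First I would fix an arbitrary $\varepsilon>0$ and apply the definition of $\Delta_H f(t)$ once for $D_1$ and once for $D_2$, obtaining neighborhoods $U_1, U_2$ and radii $\delta_1,\delta_2>0$. Setting $\delta = \min\{\delta_1,\delta_2\}$ and $U = U_1 \cap U_2$, for every admissible $h \in [0,\delta)$ with $t\pm h \in U \cap \mathbb{T}$, the estimate
\[
d_\infty\bigl[f(t+h)\ominus_{gH}f(\sigma(t)),\, D_i\cdot(h-\mu(t))\bigr] \leq \varepsilon(h-\mu(t))
\]
holds simultaneously for $i=1,2$. Choosing such an $h$ with $h-\mu(t)>0$, the triangle inequality for the metric $d_\infty$ yields
\[
d_\infty\bigl[D_1\cdot(h-\mu(t)),\, D_2\cdot(h-\mu(t))\bigr] \leq 2\varepsilon(h-\mu(t)).
\]
Then property~(ii) of Proposition~\ref{pro1}, applied with the scalar $k = h-\mu(t)>0$, rewrites the left-hand side as $(h-\mu(t))\,d_\infty(D_1,D_2)$, and dividing by $h-\mu(t)$ gives $d_\infty(D_1,D_2) \leq 2\varepsilon$. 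Since $\varepsilon$ was arbitrary, $d_\infty(D_1,D_2)=0$, whence $D_1=D_2$.

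The main obstacle is simply ensuring the existence of an admissible $h$ with strictly positive weight inside the shrunken neighborhood. If no such $h$ satisfies $h - \mu(t) > 0$ with $t+h \in \mathbb{T}$ (a possibility for certain isolated-point configurations on the right of $t$), I would instead invoke the symmetric backward inequality from the definition,
\[
d_\infty\bigl[f(\sigma(t))\ominus_{gH}f(t-h),\, D_i\cdot(\mu(t)+h)\bigr] \leq \varepsilon(\mu(t)+h),
\]
whose weight $\mu(t)+h$ is strictly positive whenever $\mu(t)>0$ or $h>0$ with $t-h\in\mathbb{T}$; since $t\in\mathbb{T}^\kappa$, at least one of the two regimes is available, and the identical triangle-inequality-plus-scaling argument concludes $d_\infty(D_1,D_2)\leq 2\varepsilon$ in every case.
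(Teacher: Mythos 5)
Your argument is correct: the two defining $\varepsilon$-estimates, the triangle inequality for $d_\infty$, and the homogeneity property (ii) of Proposition~\ref{pro1} give $d_\infty(D_1,D_2)\leq 2\varepsilon$ once an admissible increment with strictly positive weight is found, and your case analysis (backward inequality with $h=0$ when $t$ is right-scattered, a small $h>0$ on whichever side of $t$ is dense when $\sigma(t)=t$ and $t\in\mathbb{T}^\kappa$) covers all configurations. The paper itself does not prove this proposition but defers to the cited reference \cite{omid}; your proof is the standard metric-space uniqueness argument one would expect there, so there is nothing to contrast.
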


\begin{theorem}[See \cite{omid}]
\label{th1}
Let $\mathbb{T}$ be an arbitrary time scale and consider a function
$f: \mathbb{T}\rightarrow \mathbb{R}_\mathcal{F}$. Then, the following
holds for all $t \in \mathbb{T}^\kappa$:
\begin{enumerate}[(i)]
\item If $f$ is continuous at $t$ and $t$ is right-scattered,
then $f$ is $\Delta_H$-differentiable at $t$ with
\begin{equation*}
\Delta_H f(t) =\frac{f(\sigma(t))\ominus_{gH} f(t)}{\mu(t)}.
\end{equation*}

\item If $t$ is right-dense, then $f$ is $\Delta_H$-differentiable at $t$
if and only if both limits
$$
\lim_{h\rightarrow 0^+} \frac{f(t+h)\ominus _{gH} f(t)}{h}
\quad \text{ and } \quad
\lim_{h\rightarrow 0^+} \frac{f(t)\ominus _{gH} f(t-h)}{h}
$$
exist and satisfy the equalities
$$
\lim_{h\rightarrow 0^+} \frac{f(t+h)\ominus_{gH} f(t)}{h}
=\lim_{h\rightarrow 0^+} \frac{f(t)\ominus_{gH} f(t-h)}{h}
=\Delta_H f(t).
$$
\end{enumerate}
\end{theorem}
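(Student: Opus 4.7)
The plan is to verify, in each case, the two inequalities in the definition of the $\Delta_H$-derivative, using Proposition~\ref{pro1} (translation invariance and positive homogeneity of $d_\infty$) together with the uniqueness of the $\Delta_H$-derivative just recalled.

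For Part~(i), since $t$ is right-scattered, $\mu(t)>0$ and $\sigma(t)=t+\mu(t)$ is the successor of $t$ in $\mathbb{T}$. I would take as candidate
$$\phi:=\frac{f(\sigma(t))\ominus_{gH}f(t)}{\mu(t)}$$
and check that the defining inequalities hold with $\Delta_H f(t)=\phi$. Setting $h=0$ in the second inequality gives $d_\infty[f(\sigma(t))\ominus_{gH}f(t),\mu(t)\phi]=0$ by the very definition of $\phi$, which is trivially below $\varepsilon\mu(t)$. For small $h>0$, continuity of $f$ at $t$ together with the level-set formula for $\ominus_{gH}$ (the remark after the gH-difference definition) forces $d_\infty[f(\sigma(t))\ominus_{gH}f(t-h),(\mu(t)+h)\phi]\to 0$ as $h\to 0^+$, so the quantity falls below $\varepsilon(\mu(t)+h)$ for $h$ sufficiently small. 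The first inequality is handled by choosing $\delta\le\mu(t)$: the right-scattered property implies that the only $h\in[0,\delta)$ with $t+h\in\mathbb{T}$ is $h=0$, and the inequality becomes (sign-convention aside) trivially equivalent to the already-verified second one. Uniqueness then forces $\Delta_H f(t)=\phi$.

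For Part~(ii), $t$ is right-dense, so $\sigma(t)=t$ and $\mu(t)=0$. The two inequalities in the definition collapse to
$$d_\infty[f(t+h)\ominus_{gH}f(t),\,h\,\Delta_H f(t)]\le\varepsilon h,\qquad d_\infty[f(t)\ominus_{gH}f(t-h),\,h\,\Delta_H f(t)]\le\varepsilon h,$$
and dividing both sides by $h>0$ via Proposition~\ref{pro1}(ii) turns these into
$$d_\infty\!\left[\tfrac{f(t+h)\ominus_{gH}f(t)}{h},\Delta_H f(t)\right]\le\varepsilon,\qquad d_\infty\!\left[\tfrac{f(t)\ominus_{gH}f(t-h)}{h},\Delta_H f(t)\right]\le\varepsilon.$$
This is precisely the statement that the right and left difference quotients of $f$ at $t$ both converge, in $d_\infty$, to $\Delta_H f(t)$; conversely, reading the equivalences in the opposite direction recovers the defining inequalities from the existence and equality of the two limits.

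The main obstacle will be the $\alpha$-level estimate needed in Part~(i): because $\ominus_{gH}$ is not linear, I cannot appeal to Proposition~\ref{pro1}(i) to reduce $d_\infty[f(\sigma(t))\ominus_{gH}f(t-h),f(\sigma(t))\ominus_{gH}f(t)]$ directly to $d_\infty[f(t-h),f(t)]$. Instead, the argument must be performed on each $\alpha$-level set using the remark's explicit $\min$/$\max$ formula for the gH-difference, bounding the two pairs $\underline{f(t-h)}^\alpha-\underline{f(t)}^\alpha$ and $\overline{f(t-h)}^\alpha-\overline{f(t)}^\alpha$ by the Hausdorff distance between $[f(t-h)]^\alpha$ and $[f(t)]^\alpha$, uniformly in $\alpha\in[0,1]$, which is exactly what continuity of $f$ at $t$ provides via Definition~\ref{def1}. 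Everything else is routine manipulation with the metric properties collected in Proposition~\ref{pro1}.
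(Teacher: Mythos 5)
The paper itself offers no proof of this theorem --- it is imported verbatim from \cite{omid} --- so there is no in-paper argument to compare against; I can only judge your reconstruction on its own merits, and it is essentially sound. You take the only natural route: unwind the definition of $\Delta_H f(t)$ in the two cases. In (i) the candidate $\phi=\bigl(f(\sigma(t))\ominus_{gH}f(t)\bigr)/\mu(t)$ is the right one, the second defining inequality holds exactly at $h=0$ and for small $h>0$ by continuity of $f$ at $t$, and you correctly observe that choosing $\delta\le\mu(t)$ makes $h=0$ the only value tested in the first inequality, after which uniqueness of the $\Delta_H$-derivative closes the argument. You also correctly isolate the one genuinely technical point: since $\ominus_{gH}$ is not compatible with Proposition~\ref{pro1}(i), the needed estimate $d_\infty\bigl(u\ominus_{gH}v,\,u\ominus_{gH}w\bigr)\le d_\infty(v,w)$ must be extracted levelwise from the $\min$/$\max$ endpoint formula (using that for compact intervals $d_H$ is the maximum of the endpoint differences and that $|\min\{a_1,b_1\}-\min\{a_2,b_2\}|\le\max\{|a_1-a_2|,|b_1-b_2|\}$, likewise for $\max$), which is exactly what your closing paragraph proposes. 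Part (ii) is, as you say, a direct rescaling of the two defining inequalities via Proposition~\ref{pro1}(ii), and the equivalence with the two one-sided limits is then definitional. Two small caveats you should make explicit if you write this up: the argument tacitly assumes that the gH-differences $f(\sigma(t))\ominus_{gH}f(t-h)$ exist for the relevant $h$ (gH-differences of fuzzy numbers need not exist in general), and the right-hand side $\varepsilon(h-\mu(t))$ in the first defining inequality has to be read as $\varepsilon|h-\mu(t)|$ for $h<\mu(t)$ to be meaningful --- you flagged this as a sign convention, which is the correct reading of the definition.
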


Let us denote by $\mathbb{R}_\mathcal{F}^n$ the space of all fuzzy subsets
$u$ of $\mathbb{R}^n$ that satisfy the assumptions
\begin{enumerate}[(i)]
\item $u$ maps $\mathbb{R}^n$ onto $I=[0,1]$;
\item $u$ is fuzzy convex;
\item $u$ is upper semicontinuous;
\item $[u]^0$ is a compact subset of $\mathbb{R}^n$.
\end{enumerate}
The most commonly used metric on $\mathbb{R}_\mathcal{F}^n$ involves
the Hausdorff metric distance between the level sets of the fuzzy sets,
which is defined  for any $\mathbf{u},\mathbf{v} \in \mathbb{R}_\mathcal{F}^n$ as
$$
D_\infty(\mathbf{u},\mathbf{v})
=\mathop{\sup}\left\{d_H ([\mathbf{u}]^\alpha,[\mathbf{v}]^\alpha)
: \alpha\in [0,1] \right\},
$$
where $d_H$ is the  Hausdorff distance defined by \eqref{eq8}. The pair
$(\mathbb{R}_\mathcal{F}^n,D_\infty)$ is a complete metric space.
Proposition~\ref{pro1} can be extended to
$(\mathbb{R}_\mathcal{F}^n,D_\infty)$ \cite{laksh}.

\begin{definition}
Assume that $\mathbf{u}:\mathbb{T}\rightarrow \mathbb{R}_\mathcal{F}^n$
is a fuzzy vector valued function and $t \in \mathbb{T}^\kappa$. We say
that $\mathbf{u}$ is $\Delta_H$-differentiable at $t$, if its all components
are $\Delta_H$-differentiable at $t$.
\end{definition}

\begin{definition}
The function $f: \mathbb{T} \times \mathbb{R}_\mathcal{F}^n
\rightarrow \mathbb{R}_\mathcal{F}^n$ is rd-continuous if
\begin{enumerate}[(i)]
\item it is continuous at each $(t,\mathbf{x})$ with $t$ right-dense, and

\item the limits $\lim_{(s,\mathbf{y})\rightarrow(t^-,\mathbf{x})}
f(s,\mathbf{y})=f(t^-,\mathbf{x})$ (i.e., for all $\varepsilon >0$
there exists $\delta >0$ such that
$D_\infty (\mathbf{y},\mathbf{x})+ |s-t^-| <\delta
\Longrightarrow D_\infty (f(s,\mathbf{y}), f(t^-,\mathbf{x}))<\varepsilon$)
and $\lim_{\mathbf{y}\rightarrow \mathbf{x}} f(t,\mathbf{y})
=f(t,\mathbf{x})$ (i.e., for all $\varepsilon >0$ there exists
$\delta >0$ such that $D_\infty(\mathbf{y},\mathbf{x}) <\delta
\Longrightarrow D_\infty(f(t,\mathbf{y}), f(t,\mathbf{x}))<\varepsilon$)
exist at each $(t,\mathbf{x})$ with $t$ left-dense.
\end{enumerate}
\end{definition}

We denote by $C_{rd}[\mathbb{T} \times
\mathbb{R}_\mathcal{F}^n, \mathbb{R}_\mathcal{F}^n]$
the set of all rd-continuous functions
$f: \mathbb{T} \times  \mathbb{R}_\mathcal{F}^n
\rightarrow \mathbb{R}_\mathcal{F}^n$. The spaces
$C_{rd}[\mathbb{T}\times \mathbb{R}_\mathcal{F}^n, \mathbb{R}_+]$
and $C_{rd}[\mathbb{T} \times \mathbb{R}_+, \mathbb{R}]$
are defined similarly.


\section{Main Results}
\label{sec:MR}

We investigate the practical stability of hybrid fuzzy systems
on time scales, defined through delta-Hukuhara derivatives.
Consider the fuzzy dynamic system
\begin{equation}
\label{eq2}
\Delta_H \mathbf{u}(t)= f\left(t,\mathbf{u}(t)\right),
\quad \mathbf{u}(t_0)=\mathbf{u}_0,
\end{equation}
where $f\in C_{rd}[\mathbb{T} \times \mathbb{R}_\mathcal{F}^n,
\mathbb{R}_\mathcal{F}^n]$ and $\Delta_H u(t)$ denotes the
$\Delta_H$-derivative of $\mathbf{u}$  at $t\in \mathbb{T}$.

\begin{definition}
Let $V\in C_{rd}[\mathbb{T}\times \mathbb{R}_\mathcal{F}^n,\mathbb{R}_+]$.
We say that $D^+_\Delta V(t,\mathbf{u}(t))\in \mathbb{R}$
is the upper $\Delta$-Dini derivative of $V(t,\mathbf{u}(t))$
with respect to \eqref{eq2}, if for any given $\varepsilon>0$ there exists
a right neighborhood $U_\varepsilon$ of $t\in \mathbb{T}$ such that
$$
\frac{1}{\mu^*(t,s)}\big[V\big( \sigma(t),\mathbf{u}(\sigma(t))\big)
-V\big( s,\mathbf{u}(\sigma(t))\ominus_{gH}\mu^*(t,s) f(t,\mathbf{u}(t))\big)\big]
< D^+_\Delta V(t,\mathbf{u}(t))+\varepsilon
$$
for each $s\in U_\varepsilon$ and $s>t$,
where $\mathbf{u}(t)$ is solution of the dynamic system \eqref{eq2}.
\end{definition}

\begin{theorem}
Assume that $V\in C_{rd}[\mathbb{T}\times \mathbb{R}_\mathcal{F}^n,\mathbb{R}_+]$.
If $t\in \mathbb{T}$ is right-scattered and $V(t,\mathbf{u}(t))$
is continuous at $t$, then
$$
D^+_\Delta V(t,\mathbf{u}(t))
=\frac{V(\sigma(t),\mathbf{u}(\sigma(t)))-V(t,\mathbf{u}(t))}{\mu(t)}.
$$
\end{theorem}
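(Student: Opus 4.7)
The plan is to mirror the earlier proposition about real-valued functions at right-scattered points, with the extra work of simplifying the gH-shifted second argument of $V$ that appears in the definition of $D^+_\Delta V$. The goal is to show that, under the continuity and right-scatteredness assumptions, the complicated difference quotient in the definition collapses to the standard delta-difference quotient of $V(\cdot,\mathbf{u}(\cdot))$.

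First, because $\mathbf{u}$ solves \eqref{eq2} and $t$ is right-scattered, Theorem~\ref{th1}(i) applied componentwise yields
$$
\mu(t)\,f(t,\mathbf{u}(t)) = \mu(t)\,\Delta_H\mathbf{u}(t)
= \mathbf{u}(\sigma(t))\ominus_{gH}\mathbf{u}(t).
$$
Reading the defining equation of the gH-difference backwards, this says that $\mathbf{u}(\sigma(t)) = \mathbf{u}(t)\oplus\mu(t)\,f(t,\mathbf{u}(t))$, from which I would read off the crucial algebraic identity
$$
\mathbf{u}(\sigma(t))\ominus_{gH}\mu(t)\,f(t,\mathbf{u}(t)) = \mathbf{u}(t).
$$
This is precisely the reduction needed to turn the second argument of $V$ in the subtracted term into $\mathbf{u}(t)$ as $s\to t^{+}$.

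Next, I would use this identity inside the defining inequality for $D^+_\Delta V(t,\mathbf{u}(t))$. As $s\to t^{+}$ along a right neighborhood of $t$, we have $\mu^*(t,s)=\sigma(t)-s\to \mu(t)$. Proposition~\ref{pro1} together with continuity of the gH-difference then gives
$$
\mathbf{u}(\sigma(t))\ominus_{gH}\mu^*(t,s)\,f(t,\mathbf{u}(t))
\longrightarrow \mathbf{u}(t),
$$
and the assumed continuity of $V$ at $(t,\mathbf{u}(t))$ implies that $V\bigl(s,\mathbf{u}(\sigma(t))\ominus_{gH}\mu^*(t,s) f(t,\mathbf{u}(t))\bigr)$ tends to $V(t,\mathbf{u}(t))$. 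Hence the ratio in the definition converges to
$$
\frac{V(\sigma(t),\mathbf{u}(\sigma(t)))-V(t,\mathbf{u}(t))}{\mu(t)}.
$$
A standard $\varepsilon$-argument, identical in spirit to that of the earlier real-valued proposition on $D^+_\Delta f$, then identifies $D^+_\Delta V(t,\mathbf{u}(t))$ with this limit.

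The main subtlety I anticipate is the interpretation of the right neighborhood $U_\varepsilon=(t,t+\varepsilon)\cap\mathbb{T}$ at a right-scattered $t$: for $\varepsilon<\mu(t)$ this set is empty in $\mathbb{T}$, so either $s$ must be allowed to range over real numbers in $(t,t+\varepsilon)$ (with $V$ evaluated through a continuous extension in the first variable) or the limit must be read as $s\to\sigma(t)^{-}$ with $\mu^*(t,s)\to 0^{+}$ being replaced by $\mu^*(t,s)\to\mu(t)$ along a suitably chosen approach. Either reading leads to the same conclusion through the continuity of $V$ and of the gH-difference, but the bookkeeping must be done carefully to avoid ambiguity; otherwise, the computation is the direct fuzzy analogue of the proof of the real-valued proposition stated at the start of Section~\ref{sec:prelim}.
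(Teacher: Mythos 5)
Your proposal is correct and follows essentially the same route as the paper: identify $\mathbf{u}(\sigma(t))\ominus_{gH}\mu^*(t,s)\,f(t,\mathbf{u}(t))$ with $\mathbf{u}(t)$ in the limit $s\to t^+$ via Theorem~\ref{th1}(i), use continuity of $V$ to evaluate the limit of the difference quotient, and finish with the standard $\varepsilon$-argument. The subtlety you flag about $(t,t+\varepsilon)\cap\mathbb{T}$ being empty at a right-scattered point is genuine, but the paper's proof silently adopts the same reading you propose (treating $s$ as tending to $t^{+}$ with $\mu^*(t,s)\to\mu(t)$), so your treatment matches it.
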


\begin{proof}
Because of continuity of $V$ at $t$,
\begin{gather*}
\lim_{s \rightarrow t^+}\frac{V\big( \sigma(t),\mathbf{u}(\sigma(t))\big)
-V\big( s,\mathbf{u}(\sigma(t))\ominus_{gH}\mu^*(t,s)
f(t,\mathbf{u}(t))\big)}{\sigma(t)-s}\\
=\frac{V\big( \sigma(t),\mathbf{u}(\sigma(t))\big)
-V\big(t,\mathbf{u}(\sigma(t))\ominus_{gH}\mu(t)
\Delta_{H}\mathbf{u}(t)\big)}{\mu(t)}.
\end{gather*}
Since $u$ is continuous at $t$ and $t$ is right-scattered,
it follows from item (i) of Theorem~\ref{th1} that
\begin{gather*}
\lim_{s \rightarrow t^+}\frac{V\big( \sigma(t),\mathbf{u}(\sigma(t))\big)
-V\big(s,\mathbf{u}(\sigma(t))\ominus_{gH}\mu^*(t,s)
f(t,\mathbf{u}(t))\big)}{\sigma(t)-s}\\
=\frac{V\big( \sigma(t),\mathbf{u}(\sigma(t))\big)
-V\big(t,\mathbf{u}(t)\big)}{\mu(t)}.
\end{gather*}
From this last equality, we get that for a given $\varepsilon >0$ there exists
a right neighborhood $U_\varepsilon \subseteq \mathbb{T}$ of $t$ such that
\begin{gather*}
\frac{1}{\mu^*(t,s)}\big[V\big( \sigma(t),\mathbf{u}(\sigma(t))\big)
-V\big( s,\mathbf{u}(\sigma(t))\ominus_{gH}f(t,\mathbf{u}(t))\big)\big]\\
-\frac{V\big( \sigma(t),\mathbf{u}(\sigma(t))\big)
-V\big(t,\mathbf{u}(t)\big)}{\mu(t)}<\varepsilon.
\end{gather*}
Consequently,
\begin{gather*}
\frac{1}{\mu^*(t,s)}\big[V\big( \sigma(t),\mathbf{u}(\sigma(t))\big)
-V\big( s,\mathbf{u}(\sigma(t))\ominus_{gH}f(t,\mathbf{u}(t))\big)\big]\\
<\frac{V\big( \sigma(t),\mathbf{u}(\sigma(t))\big)
-V\big(t,\mathbf{u}(t)\big)}{\mu(t)}+\varepsilon .
\end{gather*}
This means that
$\left(V(\sigma(t),\mathbf{u}(\sigma(t)))-V(t,\mathbf{u}(t))\right)/\mu(t)$
is the upper $\Delta$-Dini derivative of $V(t,\mathbf{u}(t))$.
\end{proof}

\begin{theorem}
\label{th2}
Assume that
\begin{enumerate}[(i)]
\item $V\in C_{rd}[\mathbb{T}\times \mathbb{R}_\mathcal{F}^n,\mathbb{R}_+]$,
$V(t,\mathbf{u})$ is locally Lipshitzian in $\mathbf{u}$ for each right-dense
$t\in \mathbb{T}$ (i.e., $\exists$ $L>0$ such that
$\left|V(t,\mathbf{u_1})-V(t,\mathbf{u_2})\right|
<L \, D_\infty(\mathbf{u_1},\mathbf{u_2})$ for all
$\mathbf{u_1}, \mathbf{u_2}\in \mathbb{R}_\mathcal{F}^n$);

\item $D^+_\Delta V\left(t,\mathbf{u}(t)\right)
\leq g\left(t,V(t,\mathbf{u}(t))\right)$,
where $g\in C_{rd}[\mathbb{T}\times\mathbb{R}_+,\mathbb{R}]$
and $g(t,r)\mu(t)+r$ is nondecreasing in $r$ for each $t\in\mathbb{T}$;

\item $r(t)=r(t,t_0,r_0)$ is the maximal solution of
$r^\Delta(t) =g\left(t,r(t)\right)$, $r(t_0)=r_0\geq 0$,
existing on $\mathbb{T}$.
\end{enumerate}
Then, $V(t_0,\mathbf{u_0})\leq r_0$ implies that
$V(t,\mathbf{u}(t))\leq r(t,t_0,r_0)$ for all $t\in \mathbb{T}$, $t\geq t_0$.
\end{theorem}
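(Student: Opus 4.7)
The plan is to set $m(t) := V(t, \mathbf{u}(t))$ and prove $m(t) \leq r(t, t_0, r_0)$ by a standard perturb-and-contradict argument tailored to time scales. For each $\varepsilon > 0$, I would introduce the perturbed comparison equation $r_\varepsilon^\Delta(t) = g(t, r_\varepsilon(t)) + \varepsilon$, $r_\varepsilon(t_0) = r_0 + \varepsilon$, and aim to show the strict inequality $m(t) < r_\varepsilon(t)$ for all $t \geq t_0$ in $\mathbb{T}$. The non-strict conclusion then follows by letting $\varepsilon \to 0^+$ and invoking continuous dependence of the maximal solution supplied by hypothesis (iii) on the parameters; at $t_0$ the strict inequality is immediate from $m(t_0) \leq r_0 < r_0 + \varepsilon$.

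To carry out the contradiction, I would set $t^* := \inf\{t > t_0 : m(t) > r_\varepsilon(t)\}$ and suppose $t^* < \infty$. Rd-continuity of both sides forces $m(t^*) = r_\varepsilon(t^*)$ with $m(t) \leq r_\varepsilon(t)$ on $[t_0, t^*]_\mathbb{T}$. If $t^*$ is right-scattered, the very next time-scale point $\sigma(t^*)$ must satisfy $m(\sigma(t^*)) > r_\varepsilon(\sigma(t^*))$. However, the theorem immediately preceding the statement, combined with hypothesis (ii), yields $m(\sigma(t^*)) \leq m(t^*) + \mu(t^*)\, g(t^*, m(t^*))$, and the monotonicity of $g(t,r)\mu(t) + r$ in $r$ together with $m(t^*) \leq r_\varepsilon(t^*)$ then gives
\[
m(\sigma(t^*)) \leq r_\varepsilon(t^*) + \mu(t^*)\, g(t^*, r_\varepsilon(t^*)) = r_\varepsilon(\sigma(t^*)) - \varepsilon \mu(t^*) < r_\varepsilon(\sigma(t^*)),
\]
which is the desired contradiction.

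If $t^*$ is right-dense, then $\sigma(t^*) = t^*$ and $\mu(t^*) = 0$, and the definition of the upper $\Delta$-Dini derivative combined with hypothesis (ii) supplies, for every $\eta > 0$ and $s > t^*$ in a sufficiently small right neighborhood,
\[
\frac{m(t^*) - V\bigl(s,\, \mathbf{u}(t^*) \ominus_{gH} \mu^*(t^*,s) f(t^*, \mathbf{u}(t^*))\bigr)}{\mu^*(t^*,s)} < g(t^*, m(t^*)) + \eta.
\]
Item (ii) of Theorem~\ref{th1} applied at the right-dense point $t^*$ gives the first-order estimate $D_\infty\bigl(\mathbf{u}(s),\, \mathbf{u}(t^*) \ominus_{gH} (t^* - s) f(t^*, \mathbf{u}(t^*))\bigr) = o(|t^* - s|)$, and the local Lipschitz bound in hypothesis (i) then lets me exchange the inner fuzzy argument for $\mathbf{u}(s)$ at the cost of an $o(1)$ term. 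Comparing the resulting upper-right Dini bound for $m$ with $r_\varepsilon^\Delta(t^*) = g(t^*, r_\varepsilon(t^*)) + \varepsilon$, and using $m(t^*) = r_\varepsilon(t^*)$, forces $m(s) < r_\varepsilon(s)$ for $s$ slightly to the right of $t^*$, again contradicting the definition of $t^*$.

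The hard part will be this right-dense case: exchanging $V(s,\, \mathbf{u}(t^*) \ominus_{gH} \mu^*(t^*,s) f(t^*, \mathbf{u}(t^*)))$ for $V(s, \mathbf{u}(s))$ must be justified rigorously using only the local Lipschitz constant in the second variable and the first-order approximate identity built into the $\Delta_H$-derivative. The sign $\mu^*(t^*,s) = t^* - s < 0$ at right-dense points, and the implicit well-definedness of the gH-differences that appear inside the Dini quotient, both demand careful bookkeeping. It is reassuring that the Lipschitz hypothesis is imposed \emph{only} at right-dense $t$, which is precisely where it is needed.
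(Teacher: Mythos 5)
The paper gives no proof of this theorem beyond a pointer to Theorem~3.1.1 of Lakshmikantham--Sivasundaram--Kaymakcalan, and your architecture (perturbed problem $r_\varepsilon^\Delta=g(t,r_\varepsilon)+\varepsilon$, $r_\varepsilon(t_0)=r_0+\varepsilon$; strict inequality propagated through a first-crossing argument split into right-scattered and right-dense cases; then $\varepsilon\to 0^+$ via the construction of the maximal solution) is exactly the argument that citation refers to. Your right-scattered computation is correct. The genuine gap is precisely the step you flag as ``the hard part'': the asserted estimate $D_\infty\bigl(\mathbf{u}(s),\,\mathbf{u}(t^*)\ominus_{gH}\mu^*(t^*,s)f(t^*,\mathbf{u}(t^*))\bigr)=o(|t^*-s|)$ does \emph{not} follow from item (ii) of Theorem~\ref{th1} and is false in general. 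Write $s=t^*+h$, so $\mu^*(t^*,s)=-h$. Since $[(-h)\cdot f]^\alpha=[-h\overline{f}^\alpha,-h\underline{f}^\alpha]$, the only admissible branch of the gH-difference for small $h>0$ gives
$$
[\mathbf{u}(t^*)\ominus_{gH}(-h)\cdot f]^\alpha
=\bigl[\underline{u}^\alpha+h\overline{f}^\alpha,\ \overline{u}^\alpha+h\underline{f}^\alpha\bigr],
$$
i.e., the endpoints of $[f]^\alpha$ enter crosswise. On the other hand, Theorem~\ref{th1}(ii) only controls $\mathbf{u}(t^*+h)\ominus_{gH}\mathbf{u}(t^*)$, and when that gH-difference is realized by the first (length-nondecreasing) branch one gets $[\mathbf{u}(t^*+h)]^\alpha=\bigl[\underline{u}^\alpha+h\underline{f}^\alpha+o(h),\ \overline{u}^\alpha+h\overline{f}^\alpha+o(h)\bigr]$. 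The Hausdorff distance between these two intervals is $h\,(\overline{f}^\alpha-\underline{f}^\alpha)+o(h)$, which is $O(h)$ but not $o(h)$ unless $f(t^*,\mathbf{u}(t^*))$ has degenerate level sets. So the exchange of the inner argument of $V$ cannot be bought with the Lipschitz constant alone; you would need an additional hypothesis (second-type gH-differentiability of $\mathbf{u}$ at right-dense points, or crispness of $f$ there) or a modified definition of $D^+_\Delta V$.

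Two smaller repairable points. First, ``rd-continuity forces $m(t^*)=r_\varepsilon(t^*)$'' fails when the first crossing point is left-scattered, since the inequality can then first be violated by a jump; that case must be handled by applying your right-scattered computation at $\rho(t^*)$ rather than at $t^*$. Second, the passage $\varepsilon\to 0^+$ requires knowing that solutions of the perturbed problems exist on the relevant interval and converge to the maximal solution of (iii); this is standard in the cited framework but should be invoked explicitly rather than as generic ``continuous dependence.''
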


\begin{proof}
The proof is similar to the one of \cite[Theorem 3.1.1]{laksh2}.
\end{proof}


\subsection{A comparison theorem}

We now establish a comparison result (Theorem~\ref{th3}),
which is useful to prove practical stability (Theorem~\ref{thm:ps}).
Because of the local nature of the solution $\mathbf{u}(t)$,
stability properties are proved with respect
to one condition on the Lyapunov-like function
$V(t,\mathbf{u}(t))$ that is defined only on
$\mathbb{T}\times S(\rho)$,
\begin{equation}
\label{eq:S}
S(\rho)=\left\{\mathbf{u}\in \mathbb{R}_\mathcal{F}^n :
D_\infty(\mathbf{u},\widetilde{0})<\rho\right\}.
\end{equation}

Let $\mathbb{T}$ be a nonnegative unbounded time scale with
$t_k \in \mathbb{T}$, $k \in \mathbb{N}_0$, satisfying
$0\leq t_0 < t_1 < t_2 <\cdots < t_k < \cdots$
and $t_k \rightarrow \infty$ as $k\rightarrow \infty$.
We consider the following hybrid fuzzy dynamic system:
\begin{equation}
\label{eq3}
\begin{gathered}
\Delta _H \mathbf{u}(t)=f\left(t,\mathbf{u}(t),\lambda(t,\mathbf{u}(t))\right),
\quad t \ge t_0, \quad t \in \mathbb{T},\\
\mathbf{u}(t_0)=\mathbf{u_0}\in S(\rho),
\end{gathered}
\end{equation}
where $f\in C_{rd}[\mathbb{T} \times S(\rho) \times \mathbb{R}_\mathcal{F}^n,
\mathbb{R}_\mathcal{F}^n]$, $S(\rho)$ is given by \eqref{eq:S}, and
$\lambda : \mathbb{T} \times \mathbb{R}_\mathcal{F}^n
\rightarrow \mathbb{R}_\mathcal{F}^n$
is a piecewise constant function defined by
$\lambda(t,\mathbf{u}(t)) = \lambda_k(t_k,\mathbf{u}(t_k))$
for $t \in [t_k, t_{k+1}]$, $k = 0, 1, 2, \ldots$.
If a solution $\mathbf{u}(t)$ to system \eqref{eq3} exists,
then it can be written as a piecewise function:
\begin{equation}
\label{eq10}
\mathbf{u}(t)=\mathbf{u}(t,t_0,\mathbf{u_0}) = \mathbf{u_k}(t),
\quad t_k \leq t \leq t_{k+1},
\quad k = 0, 1, 2, \ldots,
\end{equation}
with $\mathbf{u_k}(t)=\mathbf{u_k}(t, t_k, \mathbf{u_k})$
being the solution of $\Delta_H \mathbf{u}(t)
=f\left(t,\mathbf{u}(t),\lambda_k\left(t_k,\mathbf{u_k}\right)\right)$,
$\mathbf{u_k} = \mathbf{u}(t_k) \in S(\rho)$,
$t_k\leq t \leq t_{k+1}$, $k \in \mathbb{N}_0$.
We also consider the \emph{scalar comparison delta hybrid dynamic system}
\begin{equation}
\label{eq4}
\begin{gathered}
r^\Delta(t)=g\left(t,r(t),\psi(r(t))\right),
\quad  t \ge t_0, \quad t \in \mathbb{T},\\
r(t_0)=r_0\in \mathbb{R}_+,
\end{gathered}
\end{equation}
where $g\in C_{rd}[\mathbb{T} \times \mathbb{R}_+ \times \mathbb{R}_+, \mathbb{R}]$
and $\psi : \mathbb{R}_+  \rightarrow \mathbb{R}_+$ is a piecewise constant function:
$\psi(r(t)) = \psi_k(r(t_k))$, $t\in[t_k,t_{k+1}]$, $k=0,1,2,\ldots$.
Note that a piecewise function
\begin{equation}
\label{eq:rt}
r(t)= r(t,t_0,r_0) = r_k(t),
\quad t_k \leq t \leq t_{k+1},
\quad k = 0, 1, 2, \ldots
\end{equation}
is a maximal solution of \eqref{eq4} if and only if
$r_k(t) = r(t,t_k,r_k)$ is maximal solution of
$$
r^\Delta(t) =g\left(t,r(t),\psi_k(r_k)\right),
\quad r_k = r(t_k),
\quad t_k \leq t \leq t_{k+1},
\quad k = 0, 1, 2, \ldots.
$$
Now, we state and prove a comparison theorem with respect to a Lyapunov-like
function $V$. Here, the Lyapunov-like function $V$ serves as a vehicle
to transform the $\Delta$-Hukuhara hybrid fuzzy dynamic system \eqref{eq3}
into a scalar comparison delta hybrid dynamic system \eqref{eq4}, being enough
to consider the stability properties of the simpler comparison system \eqref{eq4}.

\pagebreak

\begin{theorem}
\label{th3}
Let $V\in C_{rd}[\mathbb{T}\times S(\rho) ,\mathbb{R}_+]$ be such that:
\begin{enumerate}[(i)]
\item $V(t,\mathbf{u})$ is locally Lipshitzian in
$\mathbf{u}$ for each right-dense $t\in \mathbb{T}$,
$t_k\leq t \leq t_{k+1}$, $k=0,1,2,\ldots$;

\item $D^+_\Delta V(t,\mathbf{u}(t))\leq g(t,V(t,\mathbf{u}(t)),
\psi_k(V(t_k,\mathbf{u_k})))$, where
$g\in C_{rd}[\mathbb{T}\times\mathbb{R}_+\times \mathbb{R}_+, \mathbb{R}]$,
$\psi_k : \mathbb{R}_+ \rightarrow \mathbb{R}_+$, $g(t,r,v)\mu(t)+r$ is
nondecreasing in $r$ for each $(t,v)$, and $\psi_k(v)$ and $g(t,r,v)$
are nondecreasing in $v$.
\end{enumerate}
Moreover, let
\begin{enumerate}
\item[(iii)] $r(t)$ given by \eqref{eq:rt} be the maximal solution of the
scalar comparison hybrid dynamic system \eqref{eq4},
which we assume to exist for each $t \geq t_0$, $t\in \mathbb{T}$.
\end{enumerate}
If $\mathbf{u}(t)$ given by \eqref{eq10} is a solution
of the hybrid fuzzy dynamic system \eqref{eq3} with
$V(t_0,\mathbf{u_0})\leq r_0$, then
\begin{equation}
\label{eq5}
V(t,\mathbf{u}(t))\leq r(t) \ \text{ for all }\
t\geq t_0, \quad t\in \mathbb{T}.
\end{equation}
\end{theorem}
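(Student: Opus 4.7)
The strategy is a piecewise induction on the index $k$, which on each interval $[t_k, t_{k+1}]\cap\mathbb{T}$ reduces the hybrid comparison to the non-hybrid comparison already established in Theorem~\ref{th2}.

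First I set up the reduction. On $[t_k, t_{k+1}]$ the piecewise constant functions freeze: $\lambda(t, \mathbf{u}(t)) = \lambda_k(t_k, \mathbf{u_k})$ and $\psi(r(t)) = \psi_k(r_k)$. Consequently $\mathbf{u_k}(t)$ satisfies the non-hybrid $\Delta_H$-dynamic equation $\Delta_H \mathbf{u}(t) = f(t,\mathbf{u}(t),\lambda_k(t_k,\mathbf{u_k}))$ and $r_k(t)$ satisfies the non-hybrid scalar dynamic equation $r^\Delta(t)=\tilde g_k(t,r(t))$ with
$$
\tilde g_k(t,r) := g(t, r, \psi_k(r_k)).
$$
The rd-continuity of $\tilde g_k$ and the monotonicity of $\tilde g_k(t,r)\mu(t)+r$ in $r$ are inherited directly from hypothesis (ii), so Theorem~\ref{th2} applies to the pair $(V,r_k)$ on each interval, provided we can dominate $D^+_\Delta V(t,\mathbf{u}(t))$ by $\tilde g_k(t, V(t,\mathbf{u}(t)))$ there.

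The induction then runs on $k\in\mathbb{N}_0$. For the base case $k=0$ the hypothesis $V(t_0,\mathbf{u_0})\leq r_0$ combined with the monotonicity of $\psi_0$ and of $g$ in its third argument gives, for $t\in[t_0,t_1]\cap\mathbb{T}$,
$$
D^+_\Delta V(t,\mathbf{u}(t)) \leq g\bigl(t, V(t,\mathbf{u}(t)), \psi_0(V(t_0,\mathbf{u_0}))\bigr) \leq g\bigl(t, V(t,\mathbf{u}(t)), \psi_0(r_0)\bigr) = \tilde g_0\bigl(t, V(t,\mathbf{u}(t))\bigr),
$$
so Theorem~\ref{th2} yields $V(t,\mathbf{u}(t))\leq r_0(t)=r(t)$ on $[t_0,t_1]\cap\mathbb{T}$ and, in particular at $t=t_1$, $V(t_1,\mathbf{u_1})\leq r_1$. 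For the inductive step, assuming \eqref{eq5} holds on $[t_0,t_k]\cap\mathbb{T}$, evaluation at $t_k$ provides $V(t_k,\mathbf{u_k})\leq r_k$, and the same chain of inequalities (with $k$ in place of $0$) delivers $D^+_\Delta V\leq \tilde g_k(\cdot,V)$ on $[t_k,t_{k+1}]\cap\mathbb{T}$. A second application of Theorem~\ref{th2} on this interval then gives $V(t,\mathbf{u}(t))\leq r_k(t)=r(t)$ throughout $[t_k,t_{k+1}]\cap\mathbb{T}$, closing the induction and yielding \eqref{eq5} for all $t\geq t_0$ in $\mathbb{T}$.

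The delicate point is the treatment of the junction points $t_k$: the piecewise definitions \eqref{eq10} and \eqref{eq:rt} must be consistent at $t_k$, and the inductive hypothesis $V(t_k,\mathbf{u_k})\leq r_k$ must be exploitable before one knows anything about the next interval. The monotonicity assumption that $g$ is nondecreasing in $v$, together with the nondecreasing character of $\psi_k$, is precisely what allows us to replace $V(t_k,\mathbf{u_k})$ by the larger quantity $r_k$ inside the third slot of $g$, thereby decoupling the upper bound on $D^+_\Delta V$ from the solution itself and reducing each interval to the hypotheses of Theorem~\ref{th2}. Aside from this bookkeeping, the argument is a mechanical transcription of the non-hybrid proof (cf. \cite{laksh2}) interval by interval.
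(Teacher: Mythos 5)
Your proposal is correct and follows essentially the same route as the paper's proof: both reduce the hybrid comparison to Theorem~\ref{th2} interval by interval, using the monotonicity of $\psi_k$ and of $g(t,r,v)$ in $v$ to replace $\psi_k(V(t_k,\mathbf{u_k}))$ by $\psi_k(r_k)$ at each junction, and then iterate over $k$ (the paper writes out $k=0,1$ explicitly and says ``by repeating the process,'' which is exactly your induction). Your version is, if anything, slightly more careful in making the induction and the base-case use of monotonicity explicit.
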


\begin{remark}
Note that by \eqref{eq10}, inequality \eqref{eq5} implies that
$V(t,\mathbf{u_k}(t)) \leq r_k(t)$,
$t_k \leq t \leq t_{k+1}$, for all $k \in \mathbb{N}_0$.
\end{remark}

\begin{proof}
Let $\mathbf{u}(t)$ be a solution of \eqref{eq3}
on $[t_0,\infty)\cap\mathbb{T}$.
Set $m(t) = V(t, \mathbf{u}(t))$. Then,
\begin{equation*}
\frac{m(\sigma(t))-m(t)}{\mu(t)}
=\frac{V(\sigma(t),\mathbf{u}(\sigma(t)))
-V(t,\mathbf{u}(t))}{\mu(t)}.
\end{equation*}
Moreover, by condition (ii), the inequality
\begin{equation*}
D^+_\Delta m(t) \leq  g(t,V(t,\mathbf{u}(t)),
\psi_k(V(t_k,\mathbf{u_k})))=g\left(t,m(t),\psi_k(m_k)\right)
\end{equation*}
is obtained for $t_k \leq t \leq t_{k+1}$, where
$m_k=V(t_k,\mathbf{u_k})$. First, consider $t\in [t_0,t_1]\cap \mathbb{T}$.
Since $m(t_0)=V(t_0,\mathbf{u_0})\leq r_0$, by Theorem~\ref{th2} we conclude that
\begin{equation}
\label{eq6}
V(t,\mathbf{u_0}(t))\leq r_0(t,t_0,r_0),
\quad t_0 \leq t \leq t_1,
\end{equation}
where $\mathbf{u_0}(t) = \mathbf{u_0}(t,t_0,\mathbf{u_0})$ is the solution of
\begin{equation*}
\Delta _H \mathbf{u_0}(t)
=f(t,\mathbf{u_0}(t),\lambda_0(t_0,\mathbf{u_0})),
\quad \mathbf{u_0}(t_0)=\mathbf{u_0},
\quad t_0 \leq t \leq t_1,
\end{equation*}
and $r_0(t) = r_0(t,t_0,r_0)$ is the maximal solution of
\begin{equation*}
r_0^\Delta(t) =g(t,r_0(t),\psi_0(r_0)),
\quad r_0(t_0)=r_0,
\quad t_0 \leq t \leq t_1.
\end{equation*}
Now, choose $\mathbf{u_1}=\mathbf{u_0}(t_1)$. Then,
\begin{equation*}
D^+_\Delta m(t) \leq g\left(t,m(t),\psi_1(m_1)\right),
\quad t_1 \leq t \leq t_2,
\end{equation*}
where $m_1=m(t_1)=V(t_1,\mathbf{u_0}(t_1))$.
On the other hand, the inequality \eqref{eq6} gives us
\begin{equation*}
 V(t_1,\mathbf{u_0}(t_1))\leq r_0(t_1,t_0,r_0).
\end{equation*}
Set $r_0(t_1,t_0,r_0)=r_1$. Due to the monotone property
of $\psi_1$ and $g(t,r,v)$ in $v$,
\begin{equation*}
 D^+_\Delta m(t) \leq g(t,m(t),\psi_1(r_1))
\end{equation*}
and
\begin{equation*}
m(t) \leq r_1,
\quad t\in [t_1,t_2].
 \end{equation*}
Similarly,
\begin{equation*}
V(t,\mathbf{u_1}(t))\leq r_1(t,t_1 ,r_1),
\quad t \in [t_1, t_2],
\end{equation*}
is established, where
$\mathbf{u_1}(t) = \mathbf{u_1}(t,t_1,\mathbf{u_1})$ is the solution of
\begin{equation*}
\Delta _H \mathbf{u_1}(t)=f(t,\mathbf{u_1}(t),\lambda_1(t_1,\mathbf{u_1})),
\quad \mathbf{u_1}(t_1)=\mathbf{u_1},
\quad t_1 \leq t \leq t_2,
\end{equation*}
and $r_1(t) = r_1(t,t_1,r_1)$ is the maximal solution of
\begin{equation*}
r_1^\Delta(t) =g(t,r_1(t),\psi_1(r_1)),
\quad r_1(t_1)=r_1,
\quad t_1 \leq t \leq t_2.
\end{equation*}
By repeating the process and using the special choice
\begin{equation*}
\mathbf{u_k}=\mathbf{u_{k-1}}(t_k),
\quad k=1,2,\ldots,
\end{equation*}
we have
\begin{equation*}
V(t,\mathbf{u_k}(t))\leq r_k(t,t_k ,r_k),
\quad t_k \leq t \leq t_{k+1},
\end{equation*}
where $\mathbf{u_k}(t) = \mathbf{u_k}(t,t_k,\mathbf{u_k})$ is the solution of
\begin{equation*}
\Delta _H \mathbf{u_k}(t)
=f(t,\mathbf{u_k}(t),\lambda_k(t_k,\mathbf{u_k})),
\quad \mathbf{u_k}(t_k)=\mathbf{u_k},
\quad t_k \leq t \leq t_{k+1},
\end{equation*}
and $r_k(t) = r_k(t,t_k,r_k)$ is the maximal solution of
\begin{equation*}
r_k^\Delta(t) = g\left(t,r_k(t),\psi_k(r_k)\right),
\quad r_k(t_k)=r_k,
\quad t_k \leq t \leq t_{k+1},
\end{equation*}
with $r_{k-1}(t_k) = r_{k-1}(t_k,t_{k-1},r_{k-1})=r_k$ for each $k=2,3,\ldots$
By means of these inequalities, we end with the desired result.
\end{proof}


\subsection{A criterium for practical stability}

Now, we provide a sufficient condition for practical stability
of the $\Delta$-Hukuhara hybrid fuzzy dynamic system \eqref{eq3}.
The definitions of practical stability for \eqref{eq3} are similar
to the corresponding notions of practical stability for
the scalar comparison delta hybrid dynamic system \eqref{eq4} \cite{laksh3}.

\begin{definition}
\label{def2}
The hybrid fuzzy dynamic system \eqref{eq3} is
\begin{itemize}
\item \textit{practically stable} if, for given
$\lambda, A \in \mathbb{R}_+$ with $0 < \lambda < A$,
\begin{equation}
\label{(1)}
D_\infty(\mathbf{u_0},\widetilde{0})<\lambda \Rightarrow
D_\infty(\mathbf{u}(t),\widetilde{0})<A
\ \forall \ t \ge t_0,
\end{equation}
$t \in \mathbb{T}$, where $\mathbf{u}(t)$
is any solution \eqref{eq10} of \eqref{eq3};

\item \textit{practically quasi-stable} if,
for given $(\lambda,B,T_0)>0$
such that $t_0+T_0 \in \mathbb{T}$,
\begin{equation}
\label{(2)}
D_\infty(\mathbf{u_0},\widetilde{0})<\lambda \Rightarrow
D_\infty(\mathbf{u}(t),\widetilde{0})<B
\ \forall \ t\geq t_0+T_0,
\end{equation}
$t \in \mathbb{T}$, where $\mathbf{u}(t)$
is any solution \eqref{eq10} of \eqref{eq3};

\item \textit{strongly practically stable}
if both \eqref{(1)} and \eqref{(2)} hold;

\item \textit{practically asymptotically stable} if \eqref{(1)} holds and for
any $\varepsilon>0$ there exists $T_0>0$ such that $t_0+T_0 \in \mathbb{T}$ and
$D_\infty(\mathbf{u_0},\widetilde{0})<\lambda$ implies
$D_\infty(\mathbf{u}(t),\widetilde{0})<A$ for all $t\geq t_0+T_0$
in the time scale $\mathbb{T}$, where
$\mathbf{u}(t)$ is any solution \eqref{eq10} of \eqref{eq3}.
\end{itemize}
\end{definition}

\begin{theorem}
\label{thm:ps}
Assume that $0 < \lambda < A$. Let
\begin{enumerate}[(i)]
\item $V\in C_{rd}[\mathbb{T}\times S(A),\mathbb{R}_+]$, $V(t,\mathbf{u})$
be locally Lipshitzian in $u$ for each right-dense $t\in \mathbb{T}$,
$t_k\leq t \leq t_{k+1} , k=0,1,2,\ldots$;

\item $D^+_\Delta V(t,\mathbf{u}(t))\leq g(t,V(t,\mathbf{u}(t)),\psi_k(V(t_k,u_k)))$,
where $g\in C_{rd}[\mathbb{T}\times\mathbb{R}_+\times \mathbb{R}_+,\mathbb{R}]$,
$\psi_k : \mathbb{R}_+ \rightarrow \mathbb{R}_+$, $g(t,r,v)\mu(t)+r$ is nondecreasing
in $r$ for each $(t,v)$, and $\psi_k(v)$ and $g(t,r,v)$ are nondecreasing in $v$;

\item $V(t,\mathbf{u})$ satisfies
$$
b(D_\infty(\mathbf{u},\widetilde{0}))
\leq V(t,\mathbf{u})
\leq a(D_\infty(\mathbf{u},\widetilde{0})),
\quad (t,\mathbf{u})\in \mathbb{T}\times S(A),
$$
where $a,b \in \mathcal{K} := \left\{f\in C[\mathbb{R}_+,
\mathbb{R}_+] : f(x) \text{ is increasing and }
f(0)=0\right\}$, and $a(\lambda)<b(A)$.
\end{enumerate}
Then, any practical stability property (practically stable,
practically quasi-stable, strongly practically stable or
practically asymptotically stable) of the scalar comparison
delta hybrid dynamic system \eqref{eq4}, imply the corresponding practical
stability property of the $\Delta$-Hukuhara hybrid fuzzy dynamic system
\eqref{eq3} (see Definition~\ref{def2}).
\end{theorem}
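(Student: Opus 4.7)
The plan is to reduce each of the four practical stability notions of \eqref{eq3} to the corresponding property of the scalar comparison system \eqref{eq4} via the wedge estimates in hypothesis (iii) and the comparison inequality supplied by Theorem~\ref{th3}. The template is uniform across the four cases, so I would prove practical stability in full detail and then indicate the adaptations.

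Given $\mathbf{u_0}$ with $D_\infty(\mathbf{u_0},\widetilde{0})<\lambda$, I would set $r_0 = V(t_0,\mathbf{u_0})$ and observe, using the upper wedge together with the monotonicity of $a$ and the standing assumption $a(\lambda)<b(A)$, that $r_0 \le a(D_\infty(\mathbf{u_0},\widetilde{0})) < a(\lambda) < b(A)$. Practical stability of \eqref{eq4}, applied with the constants $(a(\lambda),b(A))$ in place of $(\lambda,A)$, then delivers $r(t) < b(A)$ for all $t\ge t_0$ in $\mathbb{T}$. Combining this with the comparison estimate $V(t,\mathbf{u}(t))\le r(t)$ from Theorem~\ref{th3} and the lower wedge $b(D_\infty(\mathbf{u}(t),\widetilde{0}))\le V(t,\mathbf{u}(t))$, the monotonicity of $b$ yields $D_\infty(\mathbf{u}(t),\widetilde{0}) < A$, which is precisely \eqref{(1)}.

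The principal obstacle is that $V$ is defined only on $\mathbb{T}\times S(A)$, so the comparison chain above is legitimate only as long as the trajectory remains in $S(A)$. I would close this gap with a minimal escape time argument: assume for contradiction that the trajectory reaches $\partial S(A)$, let $t^\ast\in\mathbb{T}$ be the first point with $D_\infty(\mathbf{u}(t^\ast),\widetilde{0})\ge A$, and distinguish the two cases allowed by the time scale structure. When $t^\ast$ is left-dense, continuity of $D_\infty(\mathbf{u}(\cdot),\widetilde{0})$ and the argument of the previous paragraph, which is valid strictly before $t^\ast$, produce $V(t^\ast,\mathbf{u}(t^\ast)) < b(A)$ in the limit, contradicting $b(A)\le b(D_\infty(\mathbf{u}(t^\ast),\widetilde{0}))\le V(t^\ast,\mathbf{u}(t^\ast))$. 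When $t^\ast$ is left-scattered, the jump of length $\mu(t^\ast-\mu)$ is governed by the scattered version of Theorem~\ref{th3} on the immediately preceding step, so the same contradiction appears.

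The remaining three notions follow the same pattern with adjusted constants. For practical quasi-stability I would invoke the quasi-stable property of \eqref{eq4} with $(a(\lambda),b(B),T_0)$ to obtain $r(t) < b(B)$ for $t\ge t_0 + T_0$, whence the wedges give $D_\infty(\mathbf{u}(t),\widetilde{0}) < B$ on the same interval, that is, \eqref{(2)}. Strong practical stability is simply the conjunction of the two preceding arguments, and practical asymptotic stability couples the first argument with a quasi-type conclusion $r(t)<b(A)$ for $t\ge t_0+T_0$, again transferred through the wedges. Throughout, it is the structural hypothesis $a(\lambda)<b(A)$ that turns the wedge sandwich into the correct mechanism for translating the stability constants of \eqref{eq4} into those of \eqref{eq3}.
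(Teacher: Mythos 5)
Your proposal is correct and follows essentially the same route as the paper: set $r_0=V(t_0,\mathbf{u_0})$, use the upper wedge and $a(\lambda)<b(A)$ to invoke the practical stability of \eqref{eq4} with constants $(a(\lambda),b(A))$, transfer the bound through the comparison estimate of Theorem~\ref{th3} and the lower wedge, and handle the domain issue by a first-exit-time contradiction (the paper's $t_1$ is your $t^\ast$), with the remaining three notions obtained by the same template. Your explicit left-dense versus left-scattered case split at $t^\ast$ is slightly more careful than the paper's treatment but does not change the argument.
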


\begin{proof}
First, suppose that the scalar comparison hybrid dynamic system \eqref{eq4}
is practically stable. It follows that for given
$a(\lambda),b(A) \in \mathbb{R}_+$ we have
\begin{equation}
\label{eq7}
0 \leq r_0<a(\lambda) \Longrightarrow  r(t,t_0,r_0)<b(A),
\quad t\geq t_0,
\quad t \in \mathbb{T},
\end{equation}
where $r(t) = r(t, t_0, r_0)$ is a solution of \eqref{eq4}. Consider
$D_\infty(\mathbf{u_0},\widetilde{0})<\lambda$. Now, we assert that
$D_\infty(\mathbf{u}(t),\widetilde{0})<A$, $t\geq t_0$, $t\in \mathbb{T}$,
where $\mathbf{u}(t) = \mathbf{u}(t, t_0, \mathbf{u_0})$
is a solution of \eqref{eq3}. Assume, to the contrary, that there exists
a $t_1\geq t_0$ and a solution $\mathbf{u}(t) = \mathbf{u}(t, t_0, \mathbf{u_0})$
with  $D_\infty(\mathbf{u_0},\widetilde{0})<\lambda$ such that
$$
D_\infty(\mathbf{u}(t_1),\widetilde{0})\geq A \text{ and }
D_\infty(\mathbf{u}(t),\widetilde{0})<A,
\quad t_0\leq t< t_1.
$$
This implies that
$V(t_1,\mathbf{u}(t_1))
\geq b(D_\infty(\mathbf{u_0},\widetilde{0}))
\geq b(A)$. Choose $V (t_0, \mathbf{u_0}) = r_0$ and make the special choice of
$\mathbf{u_k}$ and $r_k$ as in the proof of Theorem~\ref{th3}, to arrive at
$$
V(t,\mathbf{u}(t))\leq r(t,t_0,V (t_0, \mathbf{u_0})),
\quad t_0\leq t < t_1.
$$
Thus,
\begin{equation*}
\begin{split}
b(A)&\leq b(D_\infty(\mathbf{u_0},\widetilde{0}))
\leq V(t_1,\mathbf{u}(t_1)) \leq r(t_1,t_0,V (t_0, \mathbf{u_0}))\\
&\leq r(t_1,t_0,a(D_\infty(\mathbf{u_0},\widetilde{0})))
\leq r(t_1,t_0,a(\lambda))<b(A).
\end{split}
\end{equation*}
This contradiction proves that the hybrid fuzzy dynamic system \eqref{eq3}
is practically stable. Note that the last inequality arises from relation
\eqref{eq7}. Secondly, we prove that the hybrid fuzzy dynamic system \eqref{eq3}
is practically quasi-stable, if the scalar comparison hybrid dynamic system
\eqref{eq4} is practically quasi-stable. Due to quasi-stability of \eqref{eq4},
we deduce that
$$
0 \leq r_0<a(\lambda)
\Longrightarrow r(t,t_0,r_0)<b(B),
\quad t\geq t_0+T_0,
\quad t, t_0+T_0 \in \mathbb{T},
$$
where $r(t) = r(t, t_0, r_0)$ is a solution of \eqref{eq4}.
Suppose that $D_\infty(\mathbf{u_0},\widetilde{0})<\lambda$.
From Theorem~\ref{th3},
$V(t,\mathbf{u}(t))\leq r(t,t_0,V (t_0, \mathbf{u_0}))$,
$t \geq t_0$. Set $V (t_0, \mathbf{u_0}) = r_0$.
Then,
\begin{equation*}
\begin{split}
b\left(D_\infty(\mathbf{u}(t),\widetilde{0})\right)
&\leq V(t,\mathbf{u}(t))
\leq r(t,t_0,V (t_0, \mathbf{u_0}))\\
&\leq r(t,t_0,a(D_\infty(\mathbf{u_0},\widetilde{0})))
\leq r\left(t,t_0,a(\lambda)\right)< b(B)
\end{split}
\end{equation*}
for all $ t\geq t_0+T_0$.
Because $b$ is an increasing function,
$D_\infty(\mathbf{u}(t),\widetilde{0})<B$ and, therefore,
we have practical quasi-stability of \eqref{eq3}.
The strongly practically stability of \eqref{eq3} is obvious;
practical asymptotic stability of \eqref{eq3} is proved similarly.
\end{proof}

We illustrate the applicability of Theorem~\ref{thm:ps} with an example. Recall
that a function $p: \mathbb{T} \rightarrow \mathbb{R}$ is called regressive provided
$1+ \mu(t)p(t)\neq 0$ for all $t \in \mathbb{T}^{\kappa}$. For given regressive functions
$p$ and $q$, the ``circle plus'' and ``circle minus'' operations are defined,
respectively, by $p\oplus_{r} q=p+q+\mu p q$,
$p\ominus_{r} q = \frac{p-q}{1+\mu q}$,
and $\ominus_{r} p = 0 \ominus_{r} p$ (see, e.g., \cite{boh}).
It is easy to check that $p\oplus_{r} (\ominus_{r} q)= p\ominus_{r} q$,
$\ominus_{r} (\ominus_{r} p)= p$, $p\ominus_{r} q = \ominus_{r} (q \ominus_{r} p)$,
and $p\ominus_{r} p =0$. Note that $\ominus_{r} 1 = 0 \ominus_{r} 1 = \frac{-1}{1+\mu}$.
For convenience, we denote $\ominus_{r} 1$ by $\ominus_{r}$.

Now we are ready to illustrate our approach
with a simple example of a hybrid fuzzy
dynamic system on time scales.

\begin{example}
Let us consider the following hybrid fuzzy dynamic system
on the time scale $\mathbb{T}=\mathbb{N}_0$:
\begin{equation}
\label{exm}
\vartriangle_H \mathbf{u}(t) = \ominus_{r} \mathbf{u}(t)
\oplus \eta(t)\lambda_{k}(\mathbf{u}_{k}),
\quad t\in[\tau_k,\tau_{k+1}],
\end{equation}
$$
\mathbf{u}(\tau_k)=\mathbf{u_k}\in S(\rho),
\quad k=0,1,2,\ldots,
$$
where $\eta(t)=\frac{1}{1+\mu(t)}$ and
$$
\lambda_{k}(\tau) =
\begin{cases}
\widetilde{0} & \text{ if } k=0,\\
\tau & \text{ if } k \in \{1,2,\ldots\}.
\end{cases}
$$
Note that all points $t$ of the time scale $\mathbb{T}$ are right-scattered.
Let us choose $V(t,\mathbf{u}(t))
=D_\infty\left(\mathbf{u}(t), \widetilde{0}\right)$ for all $t \in\mathbb{T}$.
If $\mathbf{u}(t) = u\left(t, t_0, \mathbf{u}_0\right)$ is a solution
of \eqref{exm} corresponding to the initial value
$\mathbf{u}(t_0) = \mathbf{u}(\tau_0) = \mathbf{u}(0) = \mathbf{u}_0$, then we have
$$
D^+_\vartriangle V(t,\mathbf{u}(t))
=\frac{V(\sigma(t),\mathbf{u}(\sigma(t)))
-V(t,\mathbf{u}(t))}{\mu(t)}
=\frac{D_\infty\left(\mathbf{u}(\sigma(t)), \widetilde{0}\right)
-D_\infty\left(\mathbf{u}(t), \widetilde{0}\right)}{\mu(t)}.
$$
Now, let us take $g(t,w, \psi(w))=\frac{w+w_k}{1+\mu(t)}$.
Since $t$ is right-scattered, then
\begin{equation*}
\begin{split}
D^+_\vartriangle V\left(t,\mathbf{u}(t)\right)
&=\frac{D_\infty\left(\mathbf{u}(\sigma(t)), \widetilde{0}\right)
-D_\infty\left(\mathbf{u}(t), \widetilde{0}\right)}{\mu(t)}\\
&\leq \frac{D_\infty(\mathbf{u}(\sigma(t))
\ominus_{gH} \mathbf{u}(t), \widetilde{0})}{\mu(t)}
= D_\infty\left(\vartriangle_H \mathbf{u}(t), \widetilde{0}\right)\\
&=D_\infty\left(\ominus_{r} \mathbf{u}(t)
\oplus \eta(t)\lambda_{k}(\mathbf{u}_{k}), \widetilde{0}\right)
= D_\infty\left(\frac{\mathbf{u}_{k}
\oplus (-1)\cdot\mathbf{u}(t)}{1+\mu(t)},\widetilde{0}\right)\\
&\leq g\left(t,D_\infty\left(\mathbf{u}(t),\widetilde{0}\right),
\psi\left(D_\infty\left(\mathbf{u}(t),\widetilde{0}\right)\right)\right).
\end{split}
\end{equation*}
It follows from Theorem~\ref{thm:ps} that any practical stability property
of the solution of the system \eqref{eq4} with $g(t,w, \psi(w))
=\frac{1}{1+\mu(t)}(w+w_k)$ and $r(t_0)=D_\infty(\mathbf{u}_0, \widetilde{0})$
implies the corresponding stability property of the solution to \eqref{exm}.
\end{example}


\section{A Remark on Some Previous Results}
\label{sec:rem:pr}

In \cite{samban}, Sambandham considers the delta-derivative $f^\Delta$
for a function $f: \mathbb{T}\rightarrow X$, where $X$ is a Banach space.
He investigates the following hybrid fuzzy system on time scales:
\begin{equation}
\label{eq:Samb}
\begin{gathered}
\mathbf{u}^\Delta = f(t,\mathbf{u},\lambda_k(t_k,\mathbf{u_k})),
\quad t\in [t_k,t_{k+1}],\\
\mathbf{u}(t_k)=\mathbf{u_k}\in S(\rho),
\quad k=0,1,2,\ldots,
\end{gathered}
\end{equation}
where $S(\rho)=\left\{\mathbf{u}\in \mathbb{R}_\mathcal{F}^n :
D_\infty(\mathbf{u},\widetilde{0})<\rho\right\}$. Unfortunately,
in \cite{samban} it is mistakenly assumed that the space of all fuzzy subsets in
$\mathbb{R}^n$, $\mathbb{R}_\mathcal{F}^n$, is a Banach space.
However, $\mathbb{R}_\mathcal{F}^n$ is just a complete metric space and not
a Banach space due to the fact that $\mathbb{R}_\mathcal{F}^n$ is not a normed
space \cite{bede2}. Here we note that such inconsistency
in \cite{samban} is easily overcome. For that we make use of the well-known
embedding theorem (see, e.g., \cite{bede2,bede,puri}), to embed the space
$\mathbb{R}_\mathcal{F}^n$ into a Banach space.

\begin{theorem}[Embedding theorem \cite{puri}]
\label{thm:emb}
There exists a real Banach space $X$ such that $\mathbb{R}_\mathcal{F}^n$
can be embedded as a closed convex cone $C$ with vertex $0$ in $X$.
Furthermore, the embedding $j$ is an isometry (i.e., $j$ preserves distance).
\end{theorem}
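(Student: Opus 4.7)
The plan is to realize the embedding via support functions of the $\alpha$-level sets, following the classical approach of Puri and Ralescu. For every $u \in \mathbb{R}_\mathcal{F}^n$ the level sets $[u]^\alpha$, $\alpha\in[0,1]$, form a nested family of nonempty compact convex subsets of $\mathbb{R}^n$; I would associate with $u$ the support function
\[
s_u(p,\alpha) \;=\; \sup_{x\in [u]^\alpha}\langle p,x\rangle,\qquad (p,\alpha)\in S^{n-1}\times[0,1],
\]
where $S^{n-1}$ denotes the unit sphere in $\mathbb{R}^n$. For fixed $\alpha$, the map $p\mapsto s_u(p,\alpha)$ is continuous, sublinear and positively homogeneous, and $s_u$ is uniformly bounded on $S^{n-1}\times[0,1]$ because $[u]^0$ is compact. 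I would then take $X$ to be the real Banach space of bounded functions on $S^{n-1}\times[0,1]$ that are continuous in $p$ and left-continuous in $\alpha$, equipped with the supremum norm, and define $j(u)=s_u$.

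Injectivity of $j$ follows because a compact convex set is uniquely recovered from its support function as the intersection of the half-spaces $\{x:\langle p,x\rangle\leq s_u(p,\alpha)\}$, and a fuzzy number is in turn determined by the family of its level sets. The convex cone structure of $C:=j(\mathbb{R}_\mathcal{F}^n)$ is immediate from the identities $s_{\lambda\cdot u}=\lambda\,s_u$ for $\lambda\geq 0$ and $s_{u\oplus v}=s_u+s_v$, which are direct consequences of $[\lambda\cdot u]^\alpha=\lambda[u]^\alpha$ and $[u\oplus v]^\alpha=[u]^\alpha+[v]^\alpha$ combined with the standard behaviour of support functions under positive scaling and Minkowski addition; the vertex of the cone is $0=s_{\widetilde{0}}$.

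The heart of the argument is the isometry property. The classical support-function identity for compact convex subsets of $\mathbb{R}^n$ gives
\[
d_H\bigl([u]^\alpha,[v]^\alpha\bigr) \;=\; \sup_{p\in S^{n-1}}\bigl|s_u(p,\alpha)-s_v(p,\alpha)\bigr|
\]
for each $\alpha$, so taking the supremum over $\alpha\in[0,1]$ yields
\[
D_\infty(u,v)\;=\;\sup_{\alpha\in[0,1]} d_H\bigl([u]^\alpha,[v]^\alpha\bigr)\;=\;\|s_u-s_v\|_\infty\;=\;\|j(u)-j(v)\|_X.
\]
Closedness of $C$ in $X$ is then inherited from completeness of $(\mathbb{R}_\mathcal{F}^n,D_\infty)$: any Cauchy sequence $(j(u_k))$ in $C$ pulls back along the isometry to a $D_\infty$-Cauchy sequence $(u_k)$ with limit $u\in\mathbb{R}_\mathcal{F}^n$, and $j(u)=\lim_k j(u_k)$ by the isometric identity above.

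The main obstacle is the choice of the ambient Banach space $X$. Since $\alpha\mapsto[u]^\alpha$ is only \emph{left}-continuous in the Hausdorff metric, the support function $s_u$ need not be jointly continuous on $S^{n-1}\times[0,1]$, so one cannot naively take $X=C(S^{n-1}\times[0,1])$. The remedy, which is precisely the delicate technical point in the Puri-Ralescu construction, is to work in the slightly larger Banach space of functions that are continuous in $p$ and regulated (left-continuous) in $\alpha$ under the sup norm, and then to verify that every $s_u$ lies in this space and that the image $C$ is closed there.
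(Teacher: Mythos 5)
The paper offers no proof of this statement: it is imported verbatim from Puri and Ralescu \cite{puri} and used as a black box in Section~\ref{sec:rem:pr}, so there is nothing in the text to compare against. Your sketch is a correct reconstruction of the actual Puri--Ralescu argument: the support-function map $u\mapsto s_u$ on $S^{n-1}\times[0,1]$, the classical identity $d_H(A,B)=\sup_{p\in S^{n-1}}|s_A(p)-s_B(p)|$ for compact convex sets giving the isometry, the Minkowski-sum and positive-scaling identities giving the convex cone structure with vertex $j(\widetilde{0})=0$, and closedness of the image pulled back from completeness of $(\mathbb{R}_\mathcal{F}^n,D_\infty)$ via the isometry. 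You also correctly isolate the only delicate point, namely that $\alpha\mapsto[u]^\alpha$ is merely left-continuous in the Hausdorff metric, which is exactly why the ambient Banach space must be taken as the bounded functions continuous in $p$ and left-continuous in $\alpha$ under the sup norm rather than $C(S^{n-1}\times[0,1])$.
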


Let us denote by $\parallel \cdot \parallel$ the function
$\parallel \mathbf{u} \parallel =D_\infty (\mathbf{u},\widetilde{0})$ defined
for $\mathbf{u} \in \mathbb{R}_\mathcal{F}^n$. The next lemma asserts that
$\parallel \cdot \parallel$  has properties similar to the properties of a norm
in the usual crisp sense, without being a norm. It is not a norm because
$\mathbb{R}_\mathcal{F}^n$ is not a linear space and, consequently,
$(\mathbb{R}_\mathcal{F}^n, \parallel \cdot \parallel)$ is not a normed space.

\begin{lemma}
\label{lemma:e}
Function $\parallel \cdot \parallel$ has the following properties:
\begin{enumerate}[(i)]
\item $\parallel \mathbf{u} \parallel = 0$
if and only if $\mathbf{u}=\widetilde{0}$;

\item $\parallel \lambda \cdot \mathbf{u} \parallel
=|\lambda| \cdot \parallel \mathbf{u} \parallel$
for all $\mathbf{u} \in \mathbb{R}_\mathcal{F}^n$ and $\lambda \in \mathbb{R}$;

\item $\parallel \mathbf{u} \oplus \mathbf{v} \parallel
\leq \parallel \mathbf{u} \parallel + \parallel \mathbf{v} \parallel $
for all $\mathbf{u},\mathbf{v} \in \mathbb{R}_\mathcal{F}^n$.
\end{enumerate}
\end{lemma}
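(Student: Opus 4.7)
The plan is to derive each of the three assertions by specializing the metric-level properties of $(\mathbb{R}_\mathcal{F}^n, D_\infty)$ -- that is, the analogue of Proposition~\ref{pro1} for $\mathbb{R}_\mathcal{F}^n$ recorded earlier in the excerpt -- against the distinguished element $\widetilde{0}$. Two trivial level-set identities will also be needed as lemmas in disguise, namely $\lambda\cdot\widetilde{0}=\widetilde{0}$ for every $\lambda\in\mathbb{R}$ and $\widetilde{0}\oplus\widetilde{0}=\widetilde{0}$; both follow at once from $[\widetilde{0}]^\alpha=\{0\}$ and the definitions of scalar multiplication and $\oplus$ in terms of $\alpha$-level sets, since $\lambda\cdot\{0\}=\{0\}$ and $\{0\}+\{0\}=\{0\}$.

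For (i), I would simply invoke that $D_\infty$ is a metric on $\mathbb{R}_\mathcal{F}^n$: $\|\mathbf{u}\|=D_\infty(\mathbf{u},\widetilde{0})=0$ if and only if $\mathbf{u}=\widetilde{0}$, by the defining axiom of a metric. For (ii), I would apply property (ii) of Proposition~\ref{pro1} with the second argument set to $\widetilde{0}$, and use $\lambda\cdot\widetilde{0}=\widetilde{0}$ to write
\[
\|\lambda\cdot\mathbf{u}\|=D_\infty(\lambda\cdot\mathbf{u},\widetilde{0})
=D_\infty(\lambda\cdot\mathbf{u},\lambda\cdot\widetilde{0})
=|\lambda|\,D_\infty(\mathbf{u},\widetilde{0})=|\lambda|\,\|\mathbf{u}\|.
\]
The case $\lambda=0$ is consistent with this since $0\cdot\mathbf{u}=\widetilde{0}$. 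For (iii), I would apply property (iii) of the same proposition with $w=e=\widetilde{0}$, and use $\widetilde{0}\oplus\widetilde{0}=\widetilde{0}$ to obtain
\[
\|\mathbf{u}\oplus\mathbf{v}\|
=D_\infty(\mathbf{u}\oplus\mathbf{v},\widetilde{0}\oplus\widetilde{0})
\leq D_\infty(\mathbf{u},\widetilde{0})+D_\infty(\mathbf{v},\widetilde{0})
=\|\mathbf{u}\|+\|\mathbf{v}\|.
\]

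There is no genuine obstacle: the lemma is essentially a re-packaging of the metric axioms plus the three items of Proposition~\ref{pro1} against $\widetilde{0}$, made legitimate by the explicit remark in the preliminaries that Proposition~\ref{pro1} extends verbatim to $(\mathbb{R}_\mathcal{F}^n,D_\infty)$. The mildest point of care is justifying the two auxiliary identities used above before appealing to properties (ii) and (iii); these are one-line level-set computations and should be stated explicitly so the reader sees why $\lambda\cdot\widetilde{0}$ and $\widetilde{0}\oplus\widetilde{0}$ collapse to $\widetilde{0}$, after which each of (i)--(iii) is immediate.
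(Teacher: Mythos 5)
Your proof is correct and follows essentially the same route as the paper: item (i) from the metric axiom for $D_\infty$, item (ii) from Proposition~\ref{pro1}(ii) after rewriting $\widetilde{0}$ as $\lambda\cdot\widetilde{0}$, and item (iii) from Proposition~\ref{pro1}(iii) after rewriting $\widetilde{0}$ as $\widetilde{0}\oplus\widetilde{0}$. The only difference is that you explicitly justify the auxiliary identities $\lambda\cdot\widetilde{0}=\widetilde{0}$ and $\widetilde{0}\oplus\widetilde{0}=\widetilde{0}$ via level sets, which the paper leaves implicit.
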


\begin{proof}
\begin{enumerate}[(i)]

\item By definition of $\parallel \cdot \parallel$, we have that $\parallel
\mathbf{u} \parallel = 0$ if and only if $D_\infty (\mathbf{u},\widetilde{0})=0$.
Since $(\mathbb{R}_\mathcal{F}^n,D_\infty)$
is a metric space, $\mathbf{u}=\widetilde{0}$.

\item For each $\mathbf{u}\in \mathbb{R}_\mathcal{F}^n$
and $\lambda \in \mathbb{R}$ we have that
$$
\parallel \lambda \cdot \mathbf{u}\parallel
=D_\infty(\lambda \cdot \mathbf{u},\widetilde{0})
=D_\infty(\lambda \cdot \mathbf{u},\lambda \cdot \widetilde{0})
=|\lambda| D_\infty(\mathbf{u},\widetilde{0})
=|\lambda| \parallel \mathbf{u} \parallel
$$
because of item (ii) of Proposition~\ref{pro1}.

\item The intended inequality follows from item (iii) of Proposition~\ref{pro1}:
\begin{equation*}
\begin{split}
\parallel \mathbf{u} \oplus \mathbf{v} \parallel
&=D_\infty(\mathbf{u} \oplus \mathbf{v},\widetilde{0})
=D_\infty (\mathbf{u} \oplus \mathbf{v},\widetilde{0}\oplus \widetilde{0})\\
&\leq D_\infty (\mathbf{u},\widetilde{0})+D_\infty(\mathbf{v},\widetilde{0})
=\parallel \mathbf{u}  \parallel +\parallel \mathbf{v} \parallel.
\end{split}
\end{equation*}
\end{enumerate}
The proof is complete.
\end{proof}

From Lemma~\ref{lemma:e}, we deduce that all sufficient conditions presented
in \cite{samban} hold true in the Banach space $X$ asserted by
Theorem~\ref{thm:emb}. Indeed, it is enough to replace
$\mathbb{R}_\mathcal{F}^n$ by $j(\mathbb{R}_\mathcal{F}^n)$ in \cite{samban}
to conclude with the validity of the sufficient conditions of \cite{samban}
for the practical stability of the hybrid fuzzy system on time scales
\eqref{eq:Samb} in $j(\mathbb{R}_\mathcal{F}^n)$. Because $j$ is invertible,
one can then extend the results to $\mathbb{R}_\mathcal{F}^n$.


\section{Conclusion}
\label{sec:conc}

We investigated hybrid fuzzy systems on time scales with two
outstanding purposes: to establish practical stability of hybrid fuzzy systems
on time scales in the Lyapunov sense, based on the delta-Hukuhara derivative;
to improve and state a clarification of the results of \cite{samban}.
Furthermore, a comparison theorem was discussed, which is useful to prove
the practical stability criterion. A smart example of a hybrid fuzzy
dynamic system on time scales was also stated and discussed,
illustrating the main results of the paper.


\pagebreak

\noindent \textbf{Acknowledgements.}
This work is part of last author's PhD project.
It was partially supported by Damghan University, Iran;
and CIDMA--FCT, Portugal, within project UID/MAT/04106/2013.
The authors are grateful to Professor Dr. Milan Merkle
and three anonymous reviewers for comments and suggestions.


\vspace{1cc}


{\small

\noindent Omid Solaymani Fard \hfill (Received July 22, 2015)\newline
\noindent School of Mathematics and Computer Science\hfill (Revised February 10, 2016)\newline
\noindent Damghan University, Damghan, Iran\\
\noindent E-mails: {\tt osfard@du.ac.ir, omidsfard@gmail.com}

\medskip

\noindent Delfim F. M. Torres\newline
\noindent Center for Research and Development in Mathematics and Applications (CIDMA)\newline
\noindent Department of Mathematics, University of Aveiro, 3810--193 Aveiro, Portugal\\
\noindent E-mail: {\tt delfim@ua.pt}

\medskip

\noindent Mohadeseh Ramezan Zadeh\newline
\noindent Department of Mathematics, Damghan University, Damghan, Iran\\
\noindent E-mail: {\tt ramezanzadeh2@ymail.com}

}


\end{document}